\documentclass[preprint]{imsart}
\RequirePackage[OT1]{fontenc}
\RequirePackage{amsthm,amsmath}
\RequirePackage[numbers]{natbib}
\RequirePackage[colorlinks,citecolor=blue,urlcolor=blue]{hyperref}

\issue{}
\firstpage{}
\lastpage{}

\startlocaldefs
\usepackage[tikz]{bclogo}
\usepackage{algorithm2e}
\usepackage[normalem]{ulem}
\usepackage{amssymb, amsmath, natbib, amsthm, graphicx, euscript, mathrsfs, enumerate,textcomp }

\newcommand{\R}{\mathbb{R}}

\newcommand{\eqp}{\stackrel{p}{\longrightarrow}}
\newcommand{\eql}{\stackrel{d}{\longrightarrow}}

\renewcommand{\S}{\mathcal{S}}

\renewcommand{\P}{{\mathbb P}}
\newcommand{\BB}{\mathcal{B}}

\newcommand{\Z}{{\mathbb Z}}
\newcommand{\NN}{{\mathscr N}}

\newcommand{\Tr}{\operatorname{Tr}}

\newcommand{\E}{{\mathbb E}}

\newcommand{\Var}{\operatorname{Var}}

\newcommand{\F}{\mathcal{F}}

\theoremstyle{plain}
\newtheorem{thm}{Theorem}[section]

\newtheorem{rem}[thm]{Remark}
\newtheorem{prop}[thm]{Proposition}
\theoremstyle{definition}

\newcommand{\wS}{\widetilde{\S}}

\newcommand{\M}{M}
\newcommand{\MM}{\mathcal{M}}

\begin{document}
\begin{frontmatter}
\title{Limit Theorems for the Alloy-type Random Energy Model \thanksref{T1}}
\runtitle{Limit theorems for the alloy-type REM}

\begin{aug}
\author{\fnms{Stanislav} \snm{Molchanov}\ead[label=e1]{smolchan@uncc.edu}}
\address{Department of Mathematics and Statistics\\ University of North Carolina at Charlotte\\
Charlotte, NC 28223\\
and \\
International Laboratory of Stochastic Analysis and its Applications\\
National Research University Higher School of Economics \\ Shabolovka, 26, Moscow,  119049 Russia\\
\printead{e1}
}

\author{\fnms{Vladimir} \snm{Panov}\ead[label=e2]{vpanov@hse.ru}}
\address{International Laboratory of Stochastic Analysis and its Applications\\National Research University Higher School of Economics \\ Shabolovka, 26, Moscow,  119049 Russia.\\
\printead{e2}}

\thankstext{T1}{
The study has been funded by the Russian Science Foundation (project  \textnumero 17-11-01098).}
\runauthor{S.Molchanov and V.Panov}

\end{aug}

\begin{abstract}
In this paper, we consider limit laws for the model, which is a generalisation of the  random energy model (REM) to the case when the energy levels have the mixture distribution. More precisely, the distribution of the energy levels is assumed to be a mixture of two normal distributions, one of which is  standard normal, while the second has the mean \(\sqrt{n}a\) with some \(a\in \R,\) and the variance \(\sigma \ne 1\).  
The phase space \((a,\sigma) \subset \R \times \R_+\) is divided onto several domains, where after appropriate normalisation, the partition function converges in law to the stable distribution. These domains are separated by the critical surfaces, corresponding to transitions from the normal distribution to \(\alpha-\)stable with \(\alpha \in (1,2)\), after to 1-stable, and finally to \(\alpha-\)stable with \(\alpha \in (0,1).\) The corresponding phase diagram is the central result of this paper. 
%
%
 
\end{abstract}

\begin{keyword}
\kwd{random energy model}
\kwd{normal mixtures}
\kwd{law of large numbers}
\kwd{central limit theorem}
\kwd{stable distribution}
\kwd{phase transition}
\end{keyword}

\end{frontmatter}


\section{Introduction}
In this paper, we study the limit theorems for the sums of random exponentials 
\begin{eqnarray}
\label{mm}
\S_n (\beta) 
=
\sum_{j=1}^{\lfloor e^n \rfloor}
e^{
    \beta \sqrt{n} Z_j
}, 
\end{eqnarray}
where \(Z_1, Z_2, ... \) are i.i.d. random variables with distribution  equal to 
\begin{eqnarray}\label{F}
F_{a,\sigma}(x) = \frac{1}{2} \Phi\left(x\right) + 
\frac{1}{2} \Phi\biggl(\frac{
x - \sqrt{n} a
}{
\sigma
}
\biggr),
\end{eqnarray}
and by 
 \(\Phi(\cdot)\) we denote the probability distribution function of the standard normal random variable. The sum \(\S_n(\beta)\) is the generalisation of the famous random energy model (REM) introduced by Derrida 
 (\cite{Derrida80}, \cite{Derrida81}) as the simplified version of the Sherrington-Kirkpatrick model of spin glass.  Up to the technical detail (\(\lfloor e^n \rfloor\)  instead of \(2^n\)),  the system in the classical REM is determined by the partition function 
\begin{eqnarray}\label{REM}
S_n(\beta) = \sum_{i=1}^{\lfloor e^n \rfloor } e^{\beta \sqrt{n} \xi_i},
\end{eqnarray}
where \(\xi_i\) are i.i.d random variables with standard normal distribution. Physical interpretation assumes that \(n\) is the size of the system with \(\lfloor e^n \rfloor\) energy levels \(\sqrt{n} \xi_i.\)  On the other side, in this paper we give different interpretation of the model \eqref{mm} in terms of the popular Anderson parabolic problem, see Section~\ref{Anderson}.

Mathematical study of the systems \eqref{mm} and \eqref{REM} is mainly concentrated on  finding the free energy of the model, and on the consideration of  the limit laws depending on the value of the parameter \(\beta. \) As for the free energy for the classical REM~\eqref{REM}, 
Eisele \cite{E}, Olivieri and Picco \cite{OP} show that 
\begin{eqnarray*}
P(\beta) = \lim_{n \to \infty} \frac{
\ln S_n (\beta) 
}{
n
}
=\left\{
\begin{aligned}
1+ \beta^2 /2, & \qquad \beta \leq \sqrt{2}, \\
\sqrt{2} \beta, & \qquad \beta >\sqrt{2}.
\end{aligned}
\right.
\end{eqnarray*} 
In particular, the first line of the r.h.s. 
follows from the fact that for \(\beta\leq \sqrt{2}\) the law of large numbers holds, that is, 
\begin{eqnarray*}
\frac{S_n (\beta)}{\E [S_n(\beta)]} \eqp 1, \qquad n\to \infty,
\end{eqnarray*}
Other limit laws for classical REM model were  proven many years later, in the paper by Bovier, Kurkova, L{\"o}we \cite{BKL}. For instance, it was shown that if \(\beta \leq \sqrt{2}/2,\) then the central limit theorem holds, whereas for \(\beta > \sqrt{2}/2\) the fluctuations of the sum \(S_n(\beta)\)  are non-Gaussian.


Returning to the  model \eqref{mm}, it would be it would be important to note that the free energy for this model was recently studied by Grabchak and Molchanov \cite{GM17}.  The results are based on the observation that the free energy of the whole system is in fact the maximum
\begin{eqnarray}\label{max}
P(\beta) = \max \left\{
  P_1(\beta), P_2(\beta)
\right\},
\end{eqnarray}
where \(P_1\) and \(P_2\) are the free energy functions of the system corresponding to \(\lfloor e^n \rfloor\)  energy levels with \(\NN(0,1)\) distribution, and to \(\lfloor e^n \rfloor\)  energy levels with   \(\NN(\sqrt{n}a,\sigma^2)\) distribution respectively. Nevertheless, similar arguments cannot be applied for proving other limit laws such that the central limit theorem and convergence to the stable distributions. 

In this paper, we aim to show the limiting laws for the model \eqref{mm}. It turns out that  the limiting distributions are stable. The   L\'evy triplet \((\mu, 0, \nu)\) is  such that  \(\mu=\mu(\beta) \in \R_+\) is a drift  and \(\nu\) is a L{\'e}vy  measure on \(\R_+\) defined by
 \[\nu((x,+\infty)) = \frac{
1
}{
\sqrt{2 \pi}	
}
 x^{-\alpha(\beta)}, \qquad \forall \; x>0,\]
 with \(\alpha=\alpha(\beta) \in (0,2].\) In this article we provide the exact forms of the parameters \(\alpha(\beta)\) and \(\mu(\beta)\) and show how these parameters change when the relation between \(a\) and \(\sigma\) varies. Our findings are illustrated by  a phase diagram.   It would be a worth mentioning that the explicit values of the parameters of the limiting stable distribution were not described previously even for more simple model \eqref{REM}, and below we also present the corresponding results for this case.

The rest of the paper is organised as follows. In the next section, we provide some physical motivation of the considered systems in terms of the Anderson parabolic problem.  Next, in Section~\ref{limitlaws}, we give
a new formulation of the results for the sums \eqref{REM}, corresponding to the classical case of standard normal energy levels.   Our main findings related to the case of energy levels with mixture distribution are given in Section~\ref{main}.  All proofs are collected in Section~\ref{proofs}.  For convenience, we also provide a statement of the main results from \cite{GM17} in Appendix~\ref{A}.

\section{Anderson parabolic problem} \label{Anderson}
On the lattice \(\Z^d\), let us consider the cube \(Q_n =\left[-n,n\right]^d\) and the random Anderson Hamiltonian 
\begin{eqnarray}
\label{H}
H_n = \Delta + \beta V_n (x, \omega),
\end{eqnarray}
where \(\beta\) 
 is the reciprocal temperature, \( V_n(x,\omega), x \in Q_n, \) is the random i.i.d. potential (on some probability space, "environment", \(\left(\Omega, \F, \P \right)\)), and 
\[\Delta \psi(x) = \sum_{x': |x'-x|=1} \psi (x')\]
is the lattice Laplacian on \(Q_n\) with the Dirichlet boundary condition \[\psi(x) = 0,\qquad x \in \partial Q_n.\]
 We assume that potential is "very strong": \(V_n(x, \omega) = \sqrt{n} \xi(x,\omega),\) where \(\xi(x, \omega)\) are i.i.d. r.v.'s with the law \eqref{H}. Here the factor $\sqrt{n}$ is related to the Gaussian law. If, say, \(\P \left\{ \xi>a \right\} =\exp \left\{ -a^\alpha / \alpha\right\}, \; \alpha>1\) (Weibull's law), like in \cite{benarous}, instead of \(\sqrt{n} \) one have to use \(n^{1/\alpha}.\)
 
 Consider the parabolic problem 
 \begin{eqnarray*}
\frac{
\partial u
}{
\partial t
}
&=& H_n u, \qquad t \geq 0, \; x \in Q_n, \\  u(t,x) &=& 0, \qquad \qquad \qquad x \in \partial Q_n,\\
u(0,x) &=& \delta_y(x),	
\end{eqnarray*}
where \(y \in  Q_n \) is considered as a parameter.
Then the fundamental solution of this problem is given by 
\begin{eqnarray}\label{uuu}
u_n(t,x) = 
u_n(t,x,y)  = 
\sum_{i=1}^{|Q_n|}
e^{
\lambda_{n,i} t
}
\psi_{n,i}(x) \psi_{n,i}(y),
\end{eqnarray}
where \(\lambda_{n,i}, \psi_{n,i}\) are the eigenvalues and (normalised) eigenfunctions of the operator \(H_n\), that is, \(H_n \psi_{n,i} = \lambda_{n,i} \psi_{n,i}.\) Now 
\begin{eqnarray}\label{tr}
\Tr e^{t H_n} = \sum_{x \in Q_n} u_n(t,x,x) = \sum_{i=1}^{|Q_n|}  e^{t \lambda_{n,i}}
\end{eqnarray}
is the random exponential sum. The asymptotic analysis  of this sum and related concepts of the intermittency and localisation were discussed in numerous works, e.g.,  \cite{BMR2}, \cite{BMR},
\cite{den2012random}, \cite{gartner2007geometric},
\cite{gartner1990parabolic}, \cite{gartner1998parabolic}. 

The limit theorems for the parabolic Anderson model become the subject of the studies only recently and the picture here is still not complete. In comparison with the setup discussed in \cite{benarous} and \cite{BKL}, the main difficulty is the dependence of \(\lambda_{n,i} (\omega)\).  However, for the "very strong" potentials the situation is simplier. In this case, we can naturally use the parameter \(y\) instead of \(i,\) because for any \(y\) there exists the eigenfunction equal to the \(\delta\)-function. More precisely, if \(V_n(x, \omega) = \sqrt{n} \xi(x,\omega), \; x\in Q_n\), then with high accurancy \[\psi_{n,y}(x) =\delta_y(x), \qquad \lambda_{n,y}(\omega)= \sqrt{n} \xi(y, \omega).\]
One can estimate the errors using usual perturbation arguments, but we will not provide the caculations here. Let us simply state that at the level of physical intuition the sum 
\begin{eqnarray*}
\sum_{y \in Q_n} e^{\beta \sqrt{n} \xi(y, \omega)} = S_n(\beta) 
\end{eqnarray*}
is close to sum \eqref{tr} for \(t=\beta.\)

In this setting, the mixtures appear naturally. Assume that we have two highly disordered Hamiltonians with potentials \begin{eqnarray*}
V_1 (x, \omega) = \sqrt{n} \eta (x,\omega),
\quad 
V_2 (x, \omega) = \sqrt{n} \zeta (x,\omega),
\qquad x \in Q_n,
\end{eqnarray*}
where \(\eta, \zeta \) are two independent systems of i.i.d. Gaussian r.v.'s with different parameters. Let us consider the new alloy-type potential 
\begin{eqnarray*}
V_n (x, \omega) = \sqrt{n} \xi (x,\omega),
\qquad \mbox{where} \quad \xi (x, \omega) = 
\begin{cases} 
\eta(x, \omega) & \mbox{with probability } 1/2,\\
\zeta(x, \omega) & \mbox{with probability } 1/2.
\end{cases}
\end{eqnarray*}
The trace of the parabolic Anderson problem with this potential is close to the sum \eqref{mm}.

\section{Limit laws for the sums of normal exponentials}
\label{limitlaws}
Before we will present our results for the model \eqref{mm}, we would like to shortly discuss the corresponding results for the classical REM model.

The limit laws for the sums \eqref{REM} with standard normal r.v. \(\xi_i\)  were firstly shown in \cite{BKL} and later generalised in \cite{benarous}. Below we formulate Proposition~\ref{prop1}, which can be considered as a new version of the results from \cite{BKL}. The main advantage of our version  in comparison with the previously known facts (given in \cite{BKL} and \cite{KK}) is that we provide the exact form of the limiting distribution. As follows from the next proposition, this distribution is in fact from the class of stable laws.


\begin{prop}\label{prop1}
\begin{enumerate}[(i)]
\item If \(\beta< \sqrt{2},\) then the law of large numbers holds, that is, 
\begin{eqnarray*}
\frac{S_n (\beta)}{\E [S_n(\beta)]} \eqp 1, \qquad n\to \infty.
\end{eqnarray*}
 If \(\beta =  \sqrt{2},\) then 
\begin{eqnarray*}
\frac{S_n (\beta)}{\E [S_n(\beta)]} \eqp 1/2, \qquad n\to \infty.
\end{eqnarray*}
\item If \(\beta<\sqrt{2}/2,\) then the central limit theorem holds, that is, 
\begin{eqnarray*}
\frac{ 
S_{n} (\beta)  - \E [S_{n} (\beta)]
}
{
\sqrt{\Var (S_n(\beta))}
}
	\eql \NN(0,1), \qquad n \to \infty.
\end{eqnarray*}
If \(\beta = \sqrt{2}/2\), then 
\begin{eqnarray*}
\frac{ 
S_{n} (\beta)  - \E [S_{n} (\beta)]
}
{
\sqrt{\Var (S_n(\beta))}
}
	\eql \NN(0,1/2), \qquad n \to \infty.
\end{eqnarray*}

\item For \(\beta>\sqrt{2}/2,\) it holds
\begin{eqnarray*}
	\frac{S_{n}(\beta)-\delta_n(\beta)}{\gamma_n(\beta)}  \eql F_{\alpha(\beta), \mu(\beta)}, \qquad n \to \infty,
\end{eqnarray*}
where  \(F_{\alpha(\beta), \mu(\beta)}\) stands for  a stable distribution on \(\R_{+}\), that is,  an infinitely divisible distribution with L\'evy triplet \((\mu, 0, \nu)\) such that  \(\mu=\mu(\beta) \in \R_+\) is a drift  and \(\nu\) is a L{\'e}vy  measure on \(\R_+\) defined by
 \[\nu((x,+\infty)) = \frac{
1
}{
\sqrt{2 \pi}	
}
 x^{-\alpha(\beta)}, \qquad \forall \; x>0.\]
Moreover,
\begin{eqnarray*}
\alpha(\beta) &=& \sqrt{2}/\beta,\\
\gamma_n (\beta) &=& 
(2n)^{- \beta /(2 \sqrt{2})}
e^{\sqrt{2} \beta n},		
\end{eqnarray*}
and the choices of \(\mu (\beta)\) and \(\delta_n(\beta)\) 
are related to each other; for instance, they can be taken as follows: 
\begin{eqnarray*}
\left(\mu (\beta), \delta_n(\beta) \right)&=& \begin{cases}
\bigl( 0, \E[S_n(\beta)] \bigr), &\text{if \(\beta \in (\sqrt{2}/2, \sqrt{2})\)},\\
\bigl( 0, \frac{1}{2}\E[S_n(\beta)] \bigr), &\text{if \(\beta = \sqrt{2}\)},\\
\bigl( \frac{1}{ \sqrt{\pi} \left(\beta  - \sqrt{2}\right)},0
\bigr)
, &\text{if \(
\beta \in (\sqrt{2}, +\infty)
\).}
	\end{cases}\end{eqnarray*}
	\end{enumerate}
\end{prop}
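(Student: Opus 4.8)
The plan is to treat $S_n(\beta)$ as the row sum of a triangular array of i.i.d.\ nonnegative variables $X_{n,i}=e^{\beta\sqrt n\,\xi_i}$, $1\le i\le N$, with $N=\lfloor e^n\rfloor$, and to read off every regime from the classical theory of convergence of such arrays to infinitely divisible laws. First I would record the exponential moments $\E[X_{n,1}]=e^{\beta^2 n/2}$ and $\E[X_{n,1}^2]=e^{2\beta^2 n}$, so that $\E[S_n]=e^{n(1+\beta^2/2)}$ and $\Var(S_n)\asymp e^{n(1+2\beta^2)}$. The single computation driving all three parts is the tail of a rescaled term: with $\gamma_n=(2n)^{-\beta/(2\sqrt2)}e^{\sqrt2\beta n}$ one writes $\P(X_{n,1}/\gamma_n>x)=\P(\xi_1>t_n(x))$, where $t_n(x)=\sqrt{2n}+\frac{\ln x}{\beta\sqrt n}-\frac{\ln(2n)}{2\sqrt{2n}}+o(n^{-1/2})$, and inserting the Gaussian tail $\P(\xi_1>t)\sim (t\sqrt{2\pi})^{-1}e^{-t^2/2}$ together with the expansion of $t_n(x)^2/2$ gives
\[
N\,\P\!\left(X_{n,1}/\gamma_n>x\right)\longrightarrow \frac{1}{\sqrt{2\pi}}\,x^{-\alpha(\beta)},\qquad \alpha(\beta)=\frac{\sqrt2}{\beta},\quad x>0,
\]
which is exactly $\nu((x,+\infty))$. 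Thus the rescaled points converge vaguely to a Poisson process with intensity $\nu$, and the remaining work is the bookkeeping of the Gaussian component and of the centering in each regime.

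For part (i) I would prove the law of large numbers by truncation rather than by the second moment, since $\Var(S_n)/(\E S_n)^2\asymp e^{(\beta^2-1)n}$ vanishes only for $\beta<1$. Splitting each term at $\{\xi_i\le c\sqrt n\}$, the truncated sum concentrates about its mean by Chebyshev, while the contribution of $\{\xi_i>c\sqrt n\}$ is controlled by $\max_i\xi_i\approx\sqrt{2n}$, hence $\max_i X_{n,i}\approx e^{\sqrt2\beta n}$, together with the identity $\sqrt2\beta-1-\beta^2/2=-\tfrac12(\beta-\sqrt2)^2<0$ for $\beta<\sqrt2$; so the largest term is exponentially smaller than $\E S_n$ and the remainder is negligible in probability. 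For part (ii) I would verify the Lindeberg condition for the array $(X_{n,i}-\E X_{n,1})/\sqrt{\Var S_n}$; a plain Lyapunov bound is not sharp (its third-moment ratio closes only for $\beta<1/\sqrt3$), so I would again truncate and estimate the truncated variance and the Lindeberg remainder through incomplete Gaussian integrals. Here the threshold is $\sqrt2\beta-\tfrac12-\beta^2=-(\beta-\sqrt2/2)^2$: for $\beta<\sqrt2/2$ the maximal term is negligible against $\sqrt{\Var S_n}$ and the limit is $\NN(0,1)$. At the two critical values $\beta=\sqrt2$ and $\beta=\sqrt2/2$ the largest term is of the same order as $\E S_n$ (resp.\ $\sqrt{\Var S_n}$), and a careful split of $\E[X_{n,1}\mathbf 1\{\xi_1\le u_n\}]$ shows that the bulk accounts for exactly half of the relevant quantity, producing the factors $1/2$.

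For part (iii) I would apply the convergence criterion for triangular arrays to infinitely divisible laws. The Lévy measure $\nu$ is already identified; it remains to check that the diffusion component of the limit vanishes — i.e.\ that the rescaled variance of the small values disappears, which follows from $\int_0^1 x^2\,\nu(dx)<\infty$ and the matching of truncated second moments — and to fix the centering. For $\alpha\in(1,2)$, i.e.\ $\beta\in(\sqrt2/2,\sqrt2)$, the limit has finite mean, so $\delta_n=\E S_n$ with $\mu=0$ is admissible, and the same holds at $\beta=\sqrt2$ with the factor $1/2$ inherited from part (i). For $\alpha\in(0,1)$, i.e.\ $\beta>\sqrt2$, one has $\E S_n/\gamma_n\to\infty$, so no centering is possible ($\delta_n=0$); the drift is instead the limit of the bulk contribution, and evaluating the incomplete Gaussian integral gives
\[
N\,\E\!\left[(X_{n,1}/\gamma_n)\,\mathbf 1\{X_{n,1}/\gamma_n\le 1\}\right]\longrightarrow \frac{1}{\sqrt\pi\,(\beta-\sqrt2)}=\mu(\beta),
\]
matching the claimed drift. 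Convergence of the log-characteristic functions $N\bigl(\E[e^{\i\theta X_{n,1}/\gamma_n}]-1\bigr)\to \i\mu\theta+\int_0^\infty(e^{\i\theta x}-1)\,\nu(dx)$ then completes the identification in each sub-case.

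The principal obstacle throughout is that the lognormal terms are too heavy-tailed for second-moment or Lyapunov methods to reach the sharp thresholds $\sqrt2$ and $\sqrt2/2$, so the whole argument hinges on choosing truncation levels correctly and on sharp asymptotics of incomplete Gaussian integrals of the form $\Phi(u_n-\beta\sqrt n)$, including their polynomial $(2n)$-prefactors, which I must track to leading order (this is precisely what produces the cancellation yielding $\mu(\beta)$). The most delicate points are the two critical values $\beta=\sqrt2$ and $\beta=\sqrt2/2$, where the largest term is exactly of the order of the centering or of the standard deviation, and the constant $1/2$ has to be extracted from the precise balance between the bulk and the emerging extreme-value contribution.
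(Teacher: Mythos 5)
Your proposal for part (iii) is correct and follows essentially the same route as the paper's proof: the same normalisation $\gamma_n=(2n)^{-\beta/(2\sqrt2)}e^{\sqrt2\beta n}$ obtained by forcing $\lfloor e^n\rfloor\,\P(e^{\beta\sqrt n\xi_1}>\gamma_n x)\to\frac{1}{\sqrt{2\pi}}x^{-\sqrt2/\beta}$, the same verification that the Gaussian component vanishes via truncated moments expressed as incomplete Gaussian integrals $\Phi\bigl(\log(\gamma_n\tau)/(\beta\sqrt n)-\beta s\sqrt n\bigr)$, and the same identification of the drift $\mu(\beta)=1/(\sqrt\pi(\beta-\sqrt2))$ as the limit of $\lfloor e^n\rfloor\,\E[(X_{n,1}/\gamma_n)\mathbf 1\{X_{n,1}/\gamma_n\le 1\}]$; the only cosmetic difference is that you check the triangular-array criterion directly through characteristic functions, where the paper invokes Proposition~3.1 of \cite{Panov2017}, which packages the same classical theorems. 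For parts (i) and (ii) the paper offers no proof at all (it defers to earlier work), so your truncation argument for the LLN and your Lindeberg verification for the CLT are welcome additions rather than deviations; both sketches are sound, including the extraction of the factor $1/2$ at the critical points from $\Phi(0)=1/2$. One small correction: your claim that a Lyapunov bound cannot reach the sharp threshold $\sqrt2/2$ holds only for the third-moment version; a Lyapunov ratio with exponent $2+\delta$ and $\delta$ small does close exactly for $\beta<\sqrt2/2$ (this is precisely Method~1 in the paper's proof of the mixture CLT, Theorem~\ref{clt}), so the detour through Lindeberg, while valid, is not forced on you.
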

\begin{proof} We provide the proof of this statement in Section~\ref{proof1}. 
\end{proof}

%

 \section{Main results}\label{main}
Now let us return to the model \eqref{mm}, which can be rewritten as 
\begin{eqnarray*}
\S_n (\beta) 
=
\sum_{j=1}^{\nu_n^1}
\exp\left\{
    \beta \sqrt{n} \xi_j^1
\right\}
+
\sum_{j=1}^{\nu_n^2}
\exp\left\{
    \beta \left( 
     \sigma \sqrt{n} \xi_j^2
    + a n
    \right)
\right\}:= \S_n^1 (\beta) + \S_n^2 (\beta), 
\end{eqnarray*}
 where \(\xi_j^1, \xi_j^2\) are 2 independent sequences of i.i.d. standard normal r.v.'s, \(\nu_n^1, \nu_n^2\) have binomial distribution with parameters \((\lfloor e^n \rfloor, 0.5)\), and satisfy \(\nu_n^1 + \nu_n^2 = \lfloor e^n \rfloor.\)


The next 3 theorems yield the values of \(\beta,\)  for which  the law of large numbers, the central limit theorem and the convergence to stable distributions hold. 
\begin{thm}[Law of large numbers]\label{lln}
It holds
\begin{eqnarray}\label{a1}
	\frac{\S_{n}(\beta)}{\E [\S_n(\beta)]} \eqp 1, \qquad n\to \infty,
\end{eqnarray}
provided that
\(\beta<\beta^+\), where
\begin{eqnarray}\label{bb}
\beta^+ =
\begin{cases}
\sqrt{2}/\sigma, &\text{if \(a>\left(
  1-\sigma^2
\right) /( \sqrt{2} \sigma\)),}\\
\beta_\circ:= \frac{
2 a
}{ 1 - \sigma^2
}
,  &\text{if \(
 \left(
  1-\sigma^2
\right) / \sqrt{2} <a <
\left(
  1-\sigma^2
\right) /( \sqrt{2} \sigma)
\)},\\
\sqrt{2}, &\text{if \(a<\left(
  1-\sigma^2
\right) / \sqrt{2}\)}.\\
	\end{cases}
\end{eqnarray}
\end{thm}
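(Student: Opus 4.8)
The plan is to use the decomposition $\S_n(\beta)=\S_n^1(\beta)+\S_n^2(\beta)$ and to recognise each summand as a classical REM sum, so that Proposition~\ref{prop1}(i) applies to each separately. Conditionally on the type counts, $\S_n^1$ is a sum of $\nu_n^1$ i.i.d.\ copies of $e^{\beta\sqrt n\,\xi}$, $\xi\sim\NN(0,1)$, while
\[
\S_n^2(\beta)=e^{\beta a n}\sum_{j=1}^{\nu_n^2}e^{\beta\sigma\sqrt n\,\xi_j^2}
\]
is, up to the deterministic prefactor $e^{\beta a n}$, a REM sum at the effective inverse temperature $\beta\sigma$. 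Since $\nu_n^1,\nu_n^2$ are binomial with mean $\tfrac12\lfloor e^n\rfloor$, they concentrate and satisfy $n^{-1}\log\nu_n^i\to1$, so the number of terms has the same exponential growth rate as in \eqref{REM}; as the REM law of large numbers depends only on this rate, Proposition~\ref{prop1}(i) yields $\S_n^1/\E[\S_n^1]\eqp1$ for $\beta<\sqrt2$ and $\S_n^2/\E[\S_n^2]\eqp1$ for $\beta<\sqrt2/\sigma$, the prefactor $e^{\beta a n}$ cancelling in the latter ratio.

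Next I would compute the annealed exponents $\bar P_i:=\lim_n n^{-1}\log\E[\S_n^i]$, obtaining $\bar P_1=1+\beta^2/2$ and $\bar P_2=1+\beta a+\beta^2\sigma^2/2$, whence
\[
\bar P_1-\bar P_2=\beta\Bigl(\tfrac12\beta(1-\sigma^2)-a\Bigr),
\]
which changes sign exactly at $\beta=\beta_\circ=2a/(1-\sigma^2)$. This determines which summand dominates the total mean $\E[\S_n]=\E[\S_n^1]+\E[\S_n^2]$; away from the crossing one has $\E[\S_n^{\mathrm{dom}}]/\E[\S_n]\to1$ and $\E[\S_n^{\mathrm{sub}}]/\E[\S_n]\to0$ exponentially fast.

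The combination step is then transparent. Writing $\S_n/\E[\S_n]=\S_n^{\mathrm{dom}}/\E[\S_n]+\S_n^{\mathrm{sub}}/\E[\S_n]$, the first term equals
\[
\frac{\S_n^{\mathrm{dom}}}{\E[\S_n^{\mathrm{dom}}]}\cdot\frac{\E[\S_n^{\mathrm{dom}}]}{\E[\S_n]}\eqp1
\]
provided the mean-dominant summand is below its own LLN threshold, while the second term tends to $0$ in probability by Markov's inequality, since $\S_n^{\mathrm{sub}}\ge0$ and $\E[\S_n^{\mathrm{sub}}]/\E[\S_n]\to0$; crucially this needs no control on $\S_n^{\mathrm{sub}}$ itself, so the subdominant summand is allowed to violate its own law of large numbers. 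It remains to check, regime by regime, that $\beta<\beta^+$ is precisely the statement that the mean-dominant summand lies below its threshold ($\sqrt2$ for $\S_n^1$, $\sqrt2/\sigma$ for $\S_n^2$). For $\sigma<1$ the second summand dominates for $\beta<\beta_\circ$ and the first for $\beta>\beta_\circ$, and comparing the ordering of $\sqrt2$, $\sqrt2/\sigma$ and $\beta_\circ$ reproduces the three cases of \eqref{bb}: if $\beta_\circ>\sqrt2/\sigma$ the second summand dominates but loses its LLN first, giving $\beta^+=\sqrt2/\sigma$; if $\sqrt2<\beta_\circ<\sqrt2/\sigma$ dominance passes to the first summand exactly where its LLN has already failed, giving $\beta^+=\beta_\circ$; and if $\beta_\circ<\sqrt2$ the first summand takes over while still in its LLN regime, giving $\beta^+=\sqrt2$. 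The case $\sigma>1$ is handled identically, the same sign analysis of $\bar P_1-\bar P_2$ reproducing the thresholds in \eqref{bb}.

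I expect the main obstacle to be the bookkeeping around the random term counts $\nu_n^i$: Proposition~\ref{prop1} is stated for exactly $\lfloor e^n\rfloor$ deterministic summands, so transferring it to binomially many terms requires either conditioning on $\{\nu_n^i\in\tfrac12 e^n(1\pm\eps_n)\}$ together with a monotone sandwiching of $\S_n^i$ between REM sums of nearby deterministic sizes, or re-running the truncated second-moment argument behind Proposition~\ref{prop1} with a subexponentially perturbed number of terms. The only remaining delicacy is at the boundaries $\beta=\beta^+$, where either $\bar P_1=\bar P_2$ or a summand sits exactly at its critical temperature; these are excluded by the strict inequality $\beta<\beta^+$, so the case analysis above is complete.
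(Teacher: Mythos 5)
Your argument is correct in substance but takes a genuinely different route from the paper's. The paper never decomposes $\S_n(\beta)$: it applies the Bahr--Esseen inequality directly to the centred, normalised sum of the i.i.d.\ mixture variables $Z_j$, reducing the LLN to the existence of an $r\in(1,2)$ for which the exponent $H_r(\beta)=(1-r)+r\bigl(\lambda_r(\beta)-\gamma(\beta)\bigr)$ is negative, where $\lambda_r(\beta)=\max\bigl(r\beta^2/2,\;\beta a+r\beta^2\sigma^2/2\bigr)$ and $\gamma(\beta)=\max\bigl(\beta^2/2,\;\beta a+\beta^2\sigma^2/2\bigr)$; the three cases of \eqref{bb} then emerge from the same quadrant-by-quadrant comparison of maxima that you carry out on the annealed exponents $\bar P_1,\bar P_2$. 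You instead reduce the theorem to the pure REM statement, Proposition~\ref{prop1}(i), applied to each component separately, plus the observation that the mean-subdominant component needs no control beyond Markov's inequality. That observation is exactly what is needed in the middle regime $(1-\sigma^2)/\sqrt2<a<(1-\sigma^2)/(\sqrt2\sigma)$, where for $\beta$ between $\min(\sqrt2,\sqrt2/\sigma)$ and $\beta_\circ$ the subdominant component sits above its own critical temperature, so any attempt to prove an LLN for it separately would fail. Your route is more modular and makes the phase boundary transparent (``the threshold of whichever component carries the mean''); the paper's route is more self-contained, avoids the random binomial counts $\nu_n^i$ altogether since it works with the mixture law of $Z_1$ directly, and its moment computation is recycled in the proofs of Theorems~\ref{clt} and~\ref{stable}.

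Two patches are needed. First, as you yourself note, Proposition~\ref{prop1}(i) is stated for exactly $\lfloor e^n\rfloor$ deterministic terms; your sandwiching fix is sound, because the summands are positive and the second-moment/Bahr--Esseen bound underlying the proposition is unchanged when $\lfloor e^n\rfloor$ is replaced by $ce^n(1+o(1))$, so the threshold $\sqrt2$ (resp.\ $\sqrt2/\sigma$) depends only on the exponential growth rate of the number of terms. Second, a point you do not address: at the crossing $\beta=\beta_\circ$ one has $\E[\S_n^1]=\E[\S_n^2]$ exactly, so there is no dominant component and your dichotomy is silent; this matters because $\beta_\circ$ is an interior point of $(0,\beta^+)$ in the first case of \eqref{bb} when $\sigma>1,a<0$, and in the third case when $\sigma<1,a>0$. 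The repair is immediate: whenever $0<\beta_\circ<\beta^+$ one checks $\beta_\circ<\sqrt2$ and $\beta_\circ<\sqrt2/\sigma$, so both components obey their own LLN and
\begin{eqnarray*}
\frac{\S_n(\beta_\circ)}{\E[\S_n(\beta_\circ)]}
=\frac12\,\frac{\S_n^1(\beta_\circ)}{\E[\S_n^1(\beta_\circ)]}
+\frac12\,\frac{\S_n^2(\beta_\circ)}{\E[\S_n^2(\beta_\circ)]}
\eqp 1 .
\end{eqnarray*}
With these two repairs your proof is complete and yields exactly the thresholds \eqref{bb}.
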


\begin{thm}[Central limit theorem]\label{clt} It holds
\begin{eqnarray}\label{a2}
\frac{ 
\S_{n} (\beta)  - \E [\S_{n} (\beta)]
}
{
\sqrt{\Var (\S_n(\beta))}
}
	\eql \NN(0,1), \qquad n \to \infty,
\end{eqnarray}
provided that
\(\beta<\beta^+ / 2 \), where
\(\beta^+\) is defined by \eqref{bb}. 
\end{thm}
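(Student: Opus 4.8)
The plan is to exploit the fact that, since the $Z_j$ are i.i.d.\ with law $F_{a,\sigma}$, the sum $\S_n(\beta)=\sum_{j=1}^{\lfloor e^n\rfloor} X_{n,j}$ with $X_{n,j}:=e^{\beta\sqrt n Z_j}$ is a sum of i.i.d.\ summands. Consequently $\E[\S_n]=\lfloor e^n\rfloor\,\E[X_{n,1}]$ and $\Var(\S_n)=\lfloor e^n\rfloor\,\Var(X_{n,1})$, so the ratio in \eqref{a2} is exactly the standard normalisation of a triangular array of i.i.d.\ variables. I would therefore establish \eqref{a2} by verifying Lyapunov's condition: it suffices to exhibit some $\delta>0$ with
\[
\Lambda_n:=\frac{\lfloor e^n\rfloor\,\E\big[\,|X_{n,1}-\E X_{n,1}|^{2+\delta}\,\big]}{\big(\lfloor e^n\rfloor\,\Var(X_{n,1})\big)^{(2+\delta)/2}}\longrightarrow 0,\qquad n\to\infty.
\]

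The whole argument then reduces to tracking exponential rates. From the mixture \eqref{F},
\[
\E[X_{n,1}^p]=\tfrac12 e^{\,p^2\beta^2 n/2}+\tfrac12 e^{\,(p\beta a+p^2\beta^2\sigma^2/2)\,n},
\]
so $\tfrac1n\log\E[X_{n,1}^p]\to\rho(p):=\max\{p^2\beta^2/2,\ p\beta a+p^2\beta^2\sigma^2/2\}$. A short check gives $\rho(2+\delta)>(2+\delta)\rho(1)$, so the raw and the centred moment of order $2+\delta$ share the rate $\rho(2+\delta)$, while $\Var(X_{n,1})$ has rate $\rho(2)$. Taking $\tfrac1n\log$ of $\Lambda_n$ (and using $\tfrac1n\log\lfloor e^n\rfloor\to1$) yields $\tfrac1n\log\Lambda_n\to\ell(\delta)$, where
\[
\ell(\delta)=-\tfrac{\delta}{2}+\rho(2+\delta)-\tfrac{2+\delta}{2}\,\rho(2),
\]
and it remains to find, for every admissible $\beta$, a $\delta>0$ with $\ell(\delta)<0$.

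The two arguments of $\rho$ cross at $p^\ast=\beta_\circ/\beta$, with $\beta_\circ$ as in \eqref{bb}; since $\sigma<1$ the Gaussian branch $p^2\beta^2/2$ dominates for $p>p^\ast$, and this produces two regimes, matching which of the two mixture components controls the variance. If $\beta>\beta_\circ/2$ (so $p^\ast<2$ and both $\rho(2),\rho(2+\delta)$ lie on the Gaussian branch) one computes $\ell(\delta)=\tfrac{\delta}{2}\big(\beta^2(2+\delta)-1\big)$, which is negative for small $\delta>0$ exactly when $\beta<\sqrt2/2$. If $\beta<\beta_\circ/2$ (so $p^\ast>2$ and, for $\delta<p^\ast-2$, both moments lie on the shifted branch) one computes $\ell(\delta)=\tfrac{\delta}{2}\big(\beta^2\sigma^2(2+\delta)-1\big)$, negative for small $\delta>0$ exactly when $\beta<\sqrt2/(2\sigma)$. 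Matching these two thresholds against \eqref{bb} reproduces $\beta<\beta^+/2$ in all three cases: when $a>(1-\sigma^2)/(\sqrt2\sigma)$ one has $\beta^+/2=\sqrt2/(2\sigma)<\beta_\circ/2$, so the shifted branch governs; when $a<(1-\sigma^2)/\sqrt2$ one has $\beta^+/2=\sqrt2/2$ and the two sub-ranges $\beta<\beta_\circ/2$ and $\beta>\beta_\circ/2$ combine; and in the intermediate case $\beta^+/2=\beta_\circ/2$ coincides with the crossover, again handled on the shifted branch.

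The moment algebra is routine; the step needing care is the bookkeeping of which mixture component governs each moment, since dominance depends jointly on $\beta$ (through $\beta$ versus $\beta_\circ/2$) and on the order $2+\delta$, and one must check that the window of admissible $\delta$ stays nonempty as $\beta$ approaches the crossover $\beta_\circ/2$ (there both constraints $\delta<p^\ast-2$ and $\delta<(\beta\sigma)^{-2}-2$ remain active, and the case hypotheses guarantee the latter is positive). One should also confirm that centring does not lower the rate (that is, $\rho(2+\delta)>(2+\delta)\rho(1)$) and absorb the harmless gap between $\lfloor e^n\rfloor$ and $e^n$; both are straightforward but must be recorded.
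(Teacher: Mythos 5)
Your overall strategy is exactly the paper's first method: view $\S_n(\beta)$ as a triangular array of i.i.d.\ summands, verify Lyapunov's condition, and reduce everything to bookkeeping of the exponential rates $\rho(p)=\max\{p^2\beta^2/2,\; p\beta a+p^2\beta^2\sigma^2/2\}$. Your ratio $\Lambda_n$ coincides with the paper's $\Omega_n$, and your rate identity $\ell(\delta)=-\tfrac{\delta}{2}+\rho(2+\delta)-\tfrac{2+\delta}{2}\rho(2)$, with the two branch evaluations $\tfrac{\delta}{2}\bigl((2+\delta)\beta^2-1\bigr)$ and $\tfrac{\delta}{2}\bigl((2+\delta)\beta^2\sigma^2-1\bigr)$, is the same computation the paper performs by reusing Step~4 of the proof of Theorem~\ref{lln} with $\beta$ replaced by $2\beta$.

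There is, however, a genuine gap: your case analysis is carried out only for $\sigma<1$. You write ``since $\sigma<1$ the Gaussian branch dominates for $p>p^\ast$,'' but the theorem is stated for all $\sigma\neq 1$, and for $\sigma>1$ the dominance reverses: since $1-\sigma^2<0$, the shifted branch has the larger quadratic coefficient, so it dominates for $p>p^\ast$ when $a<0$, and for all $p>0$ when $a\geq 0$ (where $p^\ast=\beta_\circ/\beta\leq 0$ and no crossover exists). Consequently your final matching claims fail when $\sigma>1$. For instance, in the zone $a>(1-\sigma^2)/(\sqrt{2}\sigma)$ with $\sigma>1$ and $a<0$ one gets $\beta_\circ<\sqrt{2}/\sigma$ (dividing by the negative quantity $1-\sigma^2$ flips the inequality), so your asserted ordering $\sqrt{2}/(2\sigma)<\beta_\circ/2$ is false; there the CLT range $\beta<\sqrt{2}/(2\sigma)$ actually splits across both branches, with $\beta<\beta_\circ/2$ governed by the Gaussian branch (where one needs $\beta<\sqrt{2}/2$, which luckily holds) and $\beta_\circ/2<\beta<\sqrt{2}/(2\sigma)$ by the shifted one. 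And for $\sigma>1$, $a>0$ one has $\beta_\circ<0$, so the crossover picture you describe does not exist at all. The repair is mechanical --- redo the same bookkeeping with the roles of the two branches exchanged --- and this is precisely how the paper organizes it: Step~4 of the LLN proof treats the four sign cases $\sigma\gtrless 1$, $a\gtrless 0$ separately, and the CLT proof inherits that case analysis under $\beta\mapsto 2\beta$. As written, though, your proof establishes the theorem only on the half of the phase diagram with $\sigma<1$.
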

\begin{thm}[Convergence to the stable distribution]\label{stable}
\hspace{2cm}
\begin{enumerate}[(i)]
\item 
If \(a<\sqrt{2} (1-\sigma)\), then there exists a deterministic sequence \(a_n^\sharp(\beta)\) such that 
\begin{eqnarray*}
	\frac{S_{n}(\beta)-a_n^\sharp(\beta)}{\gamma_n (\beta)}   \eql F_{\sqrt{2}/\beta, 0}, \qquad n \to \infty,
\end{eqnarray*}
for any \(\beta>\beta^\sharp\), where 
\begin{eqnarray*}
\beta^\sharp&=&
\begin{cases}
\beta_\diamond/2, &\text{if \(\sigma<1\) and \(a>(1-\sigma^2) / \sqrt{2}\),}\\
\sqrt{2} / 2, &\text{otherwise,}
\end{cases}
\end{eqnarray*}
with \(\beta_\diamond = \left(
 ( \sqrt{2} -a ) - \sqrt{\left(
  \sqrt{2} - a 
\right)^2  - 2 \sigma^2}
\right)/\sigma^2.\)
\item If \(a>\sqrt{2} (1-\sigma)\), then there exists a deterministic sequence \(\breve{a}_n(\beta)\) such that 
\begin{eqnarray*}
	\frac{S_{n}(\beta)-\breve{a}_n(\beta)}{e^{\beta a n} \gamma_n (\beta \sigma)}   \eql F_{\sqrt{2}/(\beta\sigma), 0}, \qquad n \to \infty,
\end{eqnarray*}
for any \(\beta>\breve{\beta}\),where 
\begin{eqnarray*}
\beta^\sharp&=&
\begin{cases}
\beta_*/2, &\text{if \(\sigma>1\) and \(a<(1-\sigma^2) / (\sqrt{2} \sigma)\),}\\
\sqrt{2} / (2\sigma), &\text{otherwise,}
\end{cases}
\end{eqnarray*}
with \(\beta_{*} = \left(
  \sigma \sqrt{2} +a
\right) - \sqrt{\left(
  \sigma \sqrt{2} +a
\right)^2 - 2}\).
\end{enumerate}



\end{thm}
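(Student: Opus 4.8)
The plan is to exploit the representation $\S_n(\beta) = \S_n^1(\beta) + \S_n^2(\beta)$ as a sum of two \emph{independent} random exponential sums, each of which is, up to scaling, a classical REM sum to which Proposition~\ref{prop1} applies. First I would dispose of the randomness of the counts $\nu_n^1,\nu_n^2$: since they are $\mathrm{Bin}(\lfloor e^n\rfloor, 1/2)$, they concentrate around $\tfrac12 e^n$ with fluctuations $O(e^{n/2})$, so after conditioning (or coupling with the deterministic counts $\lfloor \tfrac12 e^n\rfloor$) one may treat both sums as having $\tfrac12 e^n$ summands; the factor $\tfrac12$ in the number of terms only rescales the normalisers and is absorbed into the deterministic centring sequences. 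The key structural observation is that $\S_n^1(\beta)$ is a standard-normal REM sum with temperature $\beta$, while $\S_n^2(\beta) = e^{\beta a n}\,\widetilde S_n(\beta\sigma)$, where $\widetilde S_n$ is again a standard-normal REM sum but with temperature $\beta\sigma$.

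Applying Proposition~\ref{prop1}(iii) to each piece, the natural fluctuation scale of $\S_n^1$ is $\gamma_n(\beta)\asymp e^{\sqrt2\beta n}$ and that of $\S_n^2$ is $e^{\beta a n}\gamma_n(\beta\sigma)\asymp e^{\beta(a+\sqrt2\sigma)n}$. Comparing the exponents $\sqrt2\beta$ and $\beta(a+\sqrt2\sigma)$ --- equivalently comparing $\sqrt2$ with $a+\sqrt2\sigma$ --- produces exactly the dichotomy in the statement: $\S_n^1$ dominates when $a<\sqrt2(1-\sigma)$ (case (i)), and $\S_n^2$ dominates when $a>\sqrt2(1-\sigma)$ (case (ii)). In the dominant case the limit law and its index are inherited directly from Proposition~\ref{prop1}: the surviving piece contributes $F_{\sqrt2/\beta,0}$ in case (i) and $F_{\sqrt2/(\beta\sigma),0}$ in case (ii), forcing $\alpha(\beta)=\sqrt2/\beta$ and $\alpha(\beta)=\sqrt2/(\beta\sigma)$ respectively.

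It then remains to show that the subdominant sum, divided by the dominant scale, is asymptotically deterministic and hence absorbable into $a_n^\sharp$ (resp. $\breve a_n$). For this I would invoke the law of large numbers (Theorem~\ref{lln}) or the central limit theorem (Theorem~\ref{clt}) for the subdominant piece: its mean is a deterministic sequence swept into the centring, and one only needs its \emph{fluctuation} scale to be $o$ of the dominant scale. Writing the subdominant CLT fluctuation exponent (for $\S_n^2$ one finds $\tfrac12+\beta a+\beta^2\sigma^2$, for $\S_n^1$ one finds $\tfrac12+\beta^2$) and requiring it to be strictly below the dominant exponent yields a quadratic inequality in $\beta$; its relevant root is precisely $\beta_\diamond/2$ in case (i) and $\beta_*/2$ in case (ii) --- indeed $\beta_\diamond$ solves $\sigma^2\beta^2-2(\sqrt2-a)\beta+2=0$ and $\beta_*$ solves $\beta^2-2(\sqrt2\sigma+a)\beta+2=0$, and the substitution $\beta\mapsto2\beta$ matches the fluctuation conditions exactly. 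The remaining thresholds $\sqrt2/2$ (resp. $\sqrt2/(2\sigma)$) are simply the requirement that the dominant piece itself be in its stable regime ($\beta>\sqrt2/2$ for temperature $\beta$, $\beta\sigma>\sqrt2/2$ for temperature $\beta\sigma$). Taking the larger of the two constraints gives the stated $\beta^\sharp$ (resp. $\breve\beta$), with the sub-case condition ($\sigma<1$, $a>(1-\sigma^2)/\sqrt2$, and its analogue) recording exactly when the subdominant piece is in its CLT rather than LLN regime, so that $\beta_\diamond/2$ (resp. $\beta_*/2$) is the binding constraint.

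The main obstacle is the control of the subdominant sum uniformly in the relevant $\beta$-range. One must verify that its contribution, once divided by the dominant scale, converges to a constant with no surviving randomness --- which is delicate precisely when the subdominant piece is near or past its own CLT threshold, where its fluctuations are largest. This forces the careful fluctuation-versus-scale comparison above and is the step that pins down the exact form of $\beta^\sharp$ and $\breve\beta$. A secondary technical point is tracking the $\tfrac12 e^n$ count (and the binomial randomness of $\nu_n^1,\nu_n^2$) through the limit so that the constant in the L\'evy measure and the normaliser $\gamma_n$ remain consistent with Proposition~\ref{prop1}; this is routine but must be carried out so that the centring sequences $a_n^\sharp$ and $\breve a_n$ exist as claimed.
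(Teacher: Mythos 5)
Your proposal is correct in substance and arrives at the right thresholds, but it is organised quite differently from the paper's proof. The paper never decomposes $\S_n(\beta)$ into the two random sub-sums and never touches the binomial counts $\nu_n^1,\nu_n^2$: it treats $\S_n(\beta)$ as a sum of $\lfloor e^n\rfloor$ i.i.d. variables with the \emph{mixture} law and verifies the two hypotheses of the stable-limit criterion (Proposition~3.1 of \cite{Panov2017}) for that law directly, normalised by the dominant component's scale $\breve\gamma_n$. The tail sum $\lfloor e^n\rfloor\,\P\{e^{\beta\sqrt n Z_1}\ge\breve\gamma_n x\}$ splits into two halves, the dominant half converging to the stable tail and the subdominant half to $0$; the truncated second moments $J_n(2,\tau)$ are then computed for each half, the dominant one forcing $\beta>\sqrt2/(2\sigma)$ (in case (ii)) and the subdominant one forcing $2\beta^2-2(a+\sqrt2\sigma)\beta+1<0$, whose lower root is $\beta_*/2$. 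Your dominance argument --- Proposition~\ref{prop1}(iii) for the dominant piece, plus the requirement that the subdominant piece's fluctuations be $o$ of the dominant scale --- is the probabilistic shadow of exactly this computation: the truncated second moment of the subdominant component under the dominant normalisation is precisely the squared ratio of its CLT scale to the dominant scale, so your quadratics and the paper's coincide, and your case analysis (subdominant in CLT regime versus in its own stable regime) corresponds to the paper's case split on the sign of $a+\sqrt2\sigma-\beta s$. What the paper's route buys is that every condition is a deterministic integral against the mixture law, so no randomness of the subdominant sum or of the counts ever has to be controlled, and necessity of the threshold comes out of the same computation; what your route buys is structural transparency (one phase dominates) and reuse of Theorems~\ref{lln}--\ref{clt} and Proposition~\ref{prop1} as black boxes.

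The one step you treat too lightly is the count reduction. The difference between the sum with $\nu_n^1$ terms and the sum with $\lfloor e^n/2\rfloor$ terms is a \emph{random} sum of order $e^{n/2}$ heavy-tailed exponentials, so it cannot be ``absorbed into the deterministic centring'' as written. You must either bound it --- its log-magnitude is governed by the REM free energy at system size $n/2$, namely $\tfrac n2(1+\beta^2)$ for $\beta\le1$ and $\beta n$ for $\beta>1$ (and the analogous expressions for the second component), which is indeed $o$ of the dominant exponent for all $\beta$ in the stated range, but this is a separate estimate --- or condition on $\nu_n^1$ and check that the stable limit holds uniformly over the window $|\nu_n^1-e^n/2|\le e^{n/2+\eps}$, which works because the tail sums depend on the count only through a multiplicative factor tending to $1$. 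With that repaired your argument is complete, up to the same factor-$\tfrac12$ bookkeeping between the count and the constant in the L\'evy tail that the paper's own proof also elides.
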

Theorems~\ref{lln}, \ref{clt}, \ref{stable} basically mean that there exist 6 essentially different types of relation between  \(a\) and \(\sigma)\). Figure~\ref{fig1} illustrates the division of the area \((a,\sigma) \in \R \times \R_+\)  into 6 subareas with different asymptotic behaviour of the sums \(\S_n(\beta).\)

\begin{figure}
\begin{center}
\includegraphics[width=0.8 \linewidth ]{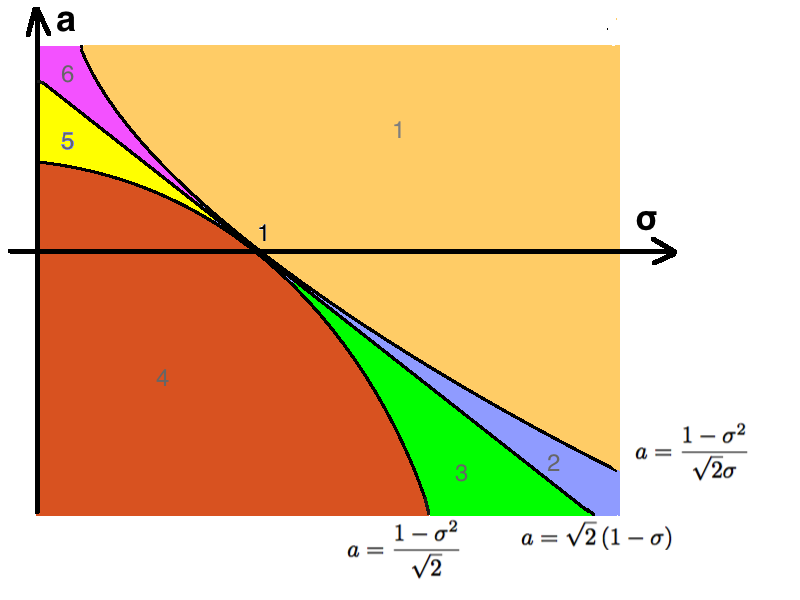}\caption{\label{fig1}Illustration of the  asymptotic behavior of 
the sums \(\S_n\) depending on \(a\) and \(\sigma\):\newline
- Zone 1(orange),   \(a>\left(
  1-\sigma^2
\right) /( \sqrt{2} \sigma)\):  CLT holds for any \(\beta< \sqrt{2}/(2 \sigma)\) whereas the sum converges to stable distribution for any \(\beta>\sqrt{2}/(2\sigma);\) LLN holds for \(\beta< \sqrt{2}\sigma.\) \newline
- Zone 4 (red), \(a<\left(
  1-\sigma^2
\right) / \sqrt{2}\): 
CLT holds for any \(\beta<\sqrt{2}/2,\) 
whereas the sum converges to stable distribution for any \(\beta>\sqrt{2}/2;\) 
LLN holds for \(\beta< \sqrt{2}.\)\newline
- Other zones, \(\left(
  1-\sigma^2
\right) / \sqrt{2}< a<\left(
  1-\sigma^2
\right) /( \sqrt{2} \sigma)\): 
CLT holds for any \(\beta<\beta_\circ/2,\) LLN - for \(\beta< \beta_\circ, \) and the fluctuations are stable if \newline 
\(\vartriangleleft\) Zone 2 (blue):  \(\sigma>1\) and
\( \sqrt{2} (1-\sigma) <a <
\left(
  1-\sigma^2
\right) /( \sqrt{2} \sigma)
\): \quad
\(\beta > \beta_*/2\); \newline
\(\vartriangleleft\) Zone 3 (green):   \(\sigma>1\) and \(
\left(
  1-\sigma^2
\right) / \sqrt{2} 
 <a < \sqrt{2} (1-\sigma)
\): \quad 
\(\beta > \sqrt{2}/2\); \newline
\(\vartriangleleft\) Zone 5 (yellow): \(\sigma<1\) and \(
\left(
  1-\sigma^2
\right) / \sqrt{2} 
 <a < \sqrt{2} (1-\sigma)
\): \quad 
\(\beta > \beta_\diamond/2\); \newline
\(\vartriangleleft\) Zone 6 (purple):   \(\sigma<1\) and  \(
 \sqrt{2} (1-\sigma) <a <
\left(
  1-\sigma^2
\right) /( \sqrt{2} \sigma)
\): \quad 
\(\beta > \sqrt{2}/(2\sigma)\).
} 
\end{center}\end{figure}

\begin{rem}
Analogously to  Proposition~\ref{prop1},  for the critical values \(\beta=\beta^+\) and \(\beta=\beta^+/2\) the convergence in \eqref{a1} and \eqref{a2} holds, but the limits are equal to \(1/2\) and \(\NN(0,1/2)\) resp.  \end{rem}


\section{Proofs}\label{proofs}
\subsection{Proof of Proposition~\ref{prop1}}
\label{proof1}
The proof is based on the Proposition~3.1 from \cite{Panov2017}, which is in fact a combination of  Theorem~1.7.3 from \cite{il}, Theorem~3.2.2 from \cite{ms}, and a number of theorems given in Chapter~IV from \cite{petrov}. Below we provide the proof for (iii), because other parts of this proposition were shown in previous papers.

\textbf{1. Choice of  \(\gamma_n\).} \label{ppp} First, we find a sequence \(\gamma_n\) such that the sum 
\begin{eqnarray*}
\Sigma_n (x) &:=& \sum_{i=1}^{\lfloor e^n \rfloor} \P\left\{
e^{
    \beta \sqrt{n} \xi_i
}> \gamma_n x\right\} 
\end{eqnarray*} has non-trivial limit as \(n\) tends to infinity.  We get 
\begin{eqnarray*}
\Sigma_n (x)
=
\lfloor e^n \rfloor \cdot \P\left\{
\xi_1 > \kappa_n(x)\right\} 
=
\lfloor e^n \rfloor 
\cdot
\left(
  1 - \Phi\left(
\kappa_n(x)\right)
\right),
\end{eqnarray*}
where \(\Phi(x)\) stands for  the distribution of the standard normal random variable and \(
 \kappa_n(x):= \log(\gamma_n x) / 
    \left( \beta \sqrt{n} \right)\).
Next, taking into account that 
\begin{eqnarray}\label{Phi}
\Phi(x) = 1- (\sqrt{2\pi} x)^{-1} e^{-x^2/2} (1+o(1)), \qquad \mbox{as} \quad x \to \infty,\end{eqnarray}we conclude that 
\begin{eqnarray*}\label{SS}
\Sigma_n (x)
&=&
\frac{1}
{ \sqrt{2 \pi}}
\frac{\lfloor e^n \rfloor }{\kappa_n(x)}
e^{-\kappa^2_n(x)/2}\\
&=&
\frac{1}{\sqrt{2 \pi}}
\frac{\lfloor e^n \rfloor }
{\frac{\log\gamma_n}{\beta \sqrt{n}} +
\frac{\log(x)}{\beta \sqrt{n}}
}
\cdot e^{
-\frac{1}{2}
\left(
 \frac{\log(\gamma_n)}{\beta \sqrt{n}} 
\right)^2
-
  \frac{\log(\gamma_n)}{\beta \sqrt{n}} 
\frac{
\log(x)
}{
\beta \sqrt{n}
}
-\frac{1}{2} 
\left(
   \frac{\log(x)}{\beta \sqrt{n}} 
\right)^2
}.
\end{eqnarray*}
Let us find \(\gamma_n\) in the form 
\begin{eqnarray*}
  \frac{\log(\gamma_n )}{\beta \sqrt{n}} 
= C \sqrt{n} + g(n),
\end{eqnarray*}
where \(g(n) = o(\sqrt{n}).\) 
 We get 
\begin{eqnarray*}
\Sigma_n (x)=
\frac{1}{ \sqrt{2 \pi}}
\frac{\lfloor e^n \rfloor }
{C \sqrt{n} + g(n) + 
\frac{\log(x)}{\beta \sqrt{n}}
}
\cdot e^{
-\frac{1}{2}
\left(
 C \sqrt{n} + g(n)
\right)^2
-
\frac{
C
}{
\beta 
}\log(x)
}(1 + o(1)).
\end{eqnarray*}
Therefore,   under the choice \(C=\sqrt{2},\) \(g(n)=- 
\log (\sqrt{2n})
/
\sqrt{2n},\)
the sum \(\Sigma_n\) converges to a non-trivial limit, namely,
\begin{eqnarray*}
\Sigma_n (x) \to \frac{1}{ \sqrt{2 \pi}}
x^{-\sqrt{2} / \beta }, \qquad \mbox{as} \quad n \to \infty.
\end{eqnarray*}
Therefore, we conclude that  the choice 
\begin{eqnarray*}
\gamma_n=
(2n)^{-\beta /(2 \sqrt{2})}
e^{\sqrt{2} \beta n}
\end{eqnarray*}
yields the convergence of \(S_n/\gamma_n\) to a non-trivial limit.

\textbf{2. Condition on the truncated moments.} Let us  analyse the asymptotic behaviour of the expression 
\begin{eqnarray}\label{Jn}
J_n(s, \tau) := \lfloor e^n \rfloor  \int_0^{\tau} x^{s} \mu_{n} (dx),
\end{eqnarray} where \(\mu_{n}\) is the distribution of \(e^{\beta \sqrt{n} \xi_1} / \gamma_n\), \(\tau>0\) and \(s = \{1,2\}.\)  
Our choice of \(\gamma_n\) 
yields 
\begin{eqnarray}
\nonumber
J_n (s,\tau) 
&=&
\frac{\lfloor e^n \rfloor }{\beta \sqrt{n}}  \int_0^{\tau} x^{s-1}
p\left(
  \frac{
\log(\gamma_n x)
}{
\beta \sqrt{n} 
}
\right)
dx\\
\nonumber
 &=& 
\frac{
\lfloor e^n \rfloor
}{
\gamma_n^{s}
}
\int_{-\infty}^{\frac{
\log(\gamma_n \tau)
}{
\beta \sqrt{n}
}
}
e^{\beta \sqrt{n} s y} p\left(
y\right) dy\\
\nonumber
&=& 
\frac{
\lfloor e^n \rfloor
e^{\beta^2 n s^2/2}
}{
\gamma_n^{s}
}
\int_{-\infty}^{
\frac{
\log(\gamma_n \tau)
}{
\beta \sqrt{n}
}
}
\frac{
1
}{
\sqrt{2 \pi}
}
e^{-(y-\beta \sqrt{n} s)^2 /2}
dy\\
&=&
\label{jn}
\frac{
\lfloor e^n \rfloor
e^{\beta^2 n s^2/2}
}{
\gamma_n^{s}
}
\Phi\left(
  \frac{
\log(\gamma_n \tau)
}{
\beta \sqrt{n}
}
-\beta s  \sqrt{n}
\right),
\end{eqnarray}
where \(p\) is the density of a standard normal random variable. 
Since \(\log(\gamma_n) \asymp \sqrt{2}  \beta n,\)
 we get that 
 \begin{eqnarray*}
  \frac{
\log(\gamma_n \tau)
}{
\beta \sqrt{n}
}
-\beta s  \sqrt{n}
= \left(
  \sqrt{2} - \beta s +
  \frac{
\log(\tau)
}{
\beta n
}
\right) \sqrt{n} - \frac{
1
}{
2\sqrt{2}
}
\frac{
\log(2n)
}{
\sqrt{n}
}.
\end{eqnarray*}
Finally, applying \eqref{Phi}, we conclude that as \(n \to \infty\)
\begin{eqnarray} \label{Jn}
 J_n (s, \tau) \asymp \begin{cases}
\left(
\sqrt{\pi} \left(\beta s - \sqrt{2}\right)
\right)^{-1} \tau^{\beta s  - \sqrt{2}}
, &\mbox{if \(\sqrt{2} - \beta s <0\),}\\
\lfloor e^n \rfloor
\cdot
\E \left[e^{\beta s \sqrt{n} \xi_1}\right]
\gamma_n^{-s}, &\mbox{if \(\sqrt{2} - \beta s >0\)},\\
\frac{
1
}{2
}
\lfloor e^n \rfloor
\cdot
\E \left[e^{\beta s \sqrt{n} \xi_1}\right]
\gamma_n^{-s}, &\mbox{if \(\sqrt{2} - \beta s =0\)}
\end{cases}
\end{eqnarray}
where \(\E \left[e^{\beta s \sqrt{n} \xi_1}\right] = e^{\beta^2 n s^2 /2},\) because 
of the fact that 
\begin{eqnarray}\label{Ee}
\E e^{c \xi} = \frac{
1
}{
\sqrt{2 \pi}
}
\int_\R e^{cx}  e^{-x^2 / 2} dx
=
 \frac{
1
}{
\sqrt{2 \pi}
}
\int_\R   e^{-(x-c)^2 / 2} dx \cdot e^{c^2 /2 }
=
e^{c^2 /2 }.
\end{eqnarray}

Therefore 
\begin{eqnarray*}
\lim_{\tau \to 0} \lim_{n \to \infty} \left(
  J_n (2, \tau) - \frac{
1
}{\lfloor e^n \rfloor
}
\left(
  J_n (1, \tau)
\right)^2
\right) =0,
\end{eqnarray*}
and therefore the condition (14) from \cite{Panov2017} 
is fulfilled for any \(\beta >\sqrt{2}/2.\) To conlcude the proof, it is sufficient to note that 
\(a_n = J_n(1,1)+o(1),\)  and its asymptotical behaviour follows from \eqref{Jn}.


\subsection{Proof of Theorem~\ref{lln}}
\textbf{1.} Denote 
\begin{eqnarray*}
\wS_{n} (\beta)=\frac{\S_{n}(\beta)}{\E[\S_{n}(\beta)]} -1 =
 \frac{
 	\sum_{j=1}^{\M}
	\left(
	       e^{\beta \sqrt{n}Z_j} - \E 	  e^{\beta \sqrt{n}Z_j}
	\right)
}{
	\M \cdot  \E 
		  e^{\beta \sqrt{n}Z_1}
},
\end{eqnarray*}
where \(M:=\lfloor e^n \rfloor.\)
Our aim is to show that there exists a constant \(r>1\) such that \(\E
	| 
		\wS_{n}(\beta)
	|^{r} 
 \to 0\) as \(n \to \infty\). This will imply that \(\wS_{n}(\beta) \eqp 0,\) and therefore the result will follow.

Applying the Bahr-Esseen inequality for \(r \in(1,2),\)  see \cite{bahresseen}, we get that 
\begin{eqnarray}
\E\left[
	| 
		\wS_{n}(\beta)
	|^{r} 
\right]
\leq
C_r
\frac{
 	\sum_{j=1}^{M}
	\E
	\left[
\left|			         e^{\beta \sqrt{n}Z_j} - \E 		  e^{\beta \sqrt{n}Z_j}
\right|^{r}
	\right]
}{
	\left( M \cdot
	\E 		  e^{\beta \sqrt{n}Z_1}
	\right)^{r}
}
=C_r M^{1-r} \frac{\MM_{n}(r)}{\left( \E 		  e^{\beta \sqrt{n}Z_1}
	\right)^{r}},
\label{BE}\end{eqnarray}
where \(C_r\) is  some constant depending on \(r\), and \(\MM_{n}(r):=\E
|			         e^{\beta \sqrt{n}Z_1} - \E e^{\beta \sqrt{n}Z_1}
|^{r}
\). The further analysis consists in establishing the asymptotical behavior of the numerator and denominator of the last fraction in \eqref{BE}.

\textbf{2.} Note that 
 for any \(c \in \R,\)
\begin{eqnarray*}
\E e^{c Z_1}&=&
\frac{
1
}{
2
} \E e^{c \xi} + \frac{
1
}{
2
} \cdot  \E e^{c (
  \sqrt{ n} a +  \sigma \xi
)}\\
&=&
\frac{
1
}{
2
}  \cdot e^{c^2 /2 } + \frac{
1
}{
2
} \cdot  e^{c 
  \sqrt{ n} a +  c^2 \sigma^2/2 },
\end{eqnarray*}
where we use \eqref{Ee}.  Therefore, the denominator in \eqref{BE} is equal to the \(r-\) th power of
\begin{eqnarray}\label{exp}
\E e^{\beta \sqrt{n}Z_1}&=&
\frac{
1
}{
2
}e^{(\beta^2 / 2) n} + \frac{
1
}{
2
} \cdot e^{(\beta a + \beta^2  \sigma^2 /2) n }\\
&=& \kappa e^{\gamma(\beta) n} \left(
  1 + o(1)
\right), \quad n \to \infty.
\nonumber
\end{eqnarray}

\textbf{3.} Next, we proceed with studying the asymptotics of \(\MM_n(r).\) Taking into account \eqref{Ee}, we get
\begin{eqnarray}\nonumber
\MM_n(r) 
&\leq& 2^r 
\left(
 \E e^{r \beta \sqrt{n}Z_1}   
 + 
 \left(
   \E e^{\beta \sqrt{n}Z_1}
\right)^r
\right)
\\
\nonumber
&\leq& 
2^r \left[
\frac{
1
}{
2
} \cdot e^{(r^2 \beta^2 / 2) n} + \frac{
1
}{
2
}\cdot e^{r\beta a n+ (r^2 \beta^2  \sigma^2 /2) n }
\right.\\
\nonumber
&&
\hspace{1cm}
\left. 
+\left( 
\frac{
1
}{
2
}  e^{( \beta^2 / 2) n} + \frac{
1
}{
2
} e^{\beta a n+ (\beta^2  \sigma^2 /2) n }
\right)^r
\right]\\
\label{MM}
&=& 2^r \left(
\frac{
1
}{
2
} e^{(r^2 \beta^2 / 2) n} + \frac{
1
}{
2
} e^{( r\beta a + r^2 \beta^2  \sigma^2 /2) n }
\right) (1 +o(1)).
\end{eqnarray}

\textbf{4.} Finally, returning to \eqref{BE}, we conclude that
\begin{eqnarray}\label{final}
\E\left[
 | \wS_{n}(\beta)|^{r} 
\right]
 \leq
G_r \cdot 
\exp\Bigl\{ 
H_r (\beta) n
\Bigr\}
\left(
  1 + o(1) 
\right),\end{eqnarray}
where \(G_r>0\) is a constant depending on \(r\),
\begin{eqnarray*}
H_r(\beta) :=\left(
1-r
\right) + r\left( 
 \lambda_r(\beta)
-  \gamma(\beta) 
\right),
\end{eqnarray*}
 and
\begin{eqnarray*}
 \lambda_r(\beta) &=& \max( r \beta^2 / 2, \beta a + r \beta^2  \sigma^2 /2),\\
\gamma(\beta) &=& \max( \beta^2 / 2, \beta a +  \beta^2  \sigma^2 /2).
\end{eqnarray*}
For further analysis it would be convenient to consider 4 cases: 
\begin{enumerate}[(i)]
\item \(\sigma>1, a>0:\) in this case, \(\lambda_r(\beta) = \beta a +r \beta^2 \sigma^2 /2, \;
\gamma(\beta) = \beta a + \beta^2 \sigma^2 /2, \) and therefore  the rhs in \eqref{final} tends to zero iff 
\begin{eqnarray*}
H_r (\beta)
 =
 \bigl(
  1-r
\bigr)
\bigl(
  1 -r \frac{\beta^2 \sigma^2}{2}
\bigr)<0.
\end{eqnarray*}
There exists an \(r \in(1,2)\) such that this inequality is fulfilled iff \(\beta< \sqrt{2} / \sigma.\)
\item \(\sigma<1, a<0:\) in this case, \(\lambda_r(\beta) = r \beta^2  /2, \;
\gamma(\beta) =  \beta^2  /2, \) and therefore 
\begin{eqnarray*}
H_r (\beta) =  \bigl(
  1-r
\bigr)
\bigl(
  1 -r \frac{\beta^2}{2}
\bigr) < 0 
\end{eqnarray*}
 for some \(r \in (1,2)\) iff \(\beta< \sqrt{2}.\)
\item \(\sigma>1, a<0:\) it follows that 
\begin{eqnarray*}
\lambda_r (\beta) = 
\begin{cases}
r \beta^2 / 2, &\text{if \(\beta\leq \beta_\circ / r\)}, \\
\beta a + r \beta^2 \sigma^2 / 2		&\text{if \(\beta>\beta_\circ / r\),}
\end{cases}
\end{eqnarray*}
and 
\begin{eqnarray*}
\gamma (\beta) = 
\begin{cases}
 \beta^2 / 2, &\text{if \(\beta\leq \beta_\circ \)}, \\
\beta a + \beta^2 \sigma^2 / 2		&\text{if \(\beta>\beta_\circ \),}
\end{cases}
\end{eqnarray*}
Therefore, 
\begin{eqnarray}\label{Hr}
H_r (\beta) = 
\begin{cases}
\bigl(
  1-r
\bigr)
\bigl(
  1 -r \frac{\beta^2}{2}
\bigr), &\text{if \(\beta< \beta_\circ / r\)}, \\
\bigl(1-r \bigr) + r \bigl(
\beta a + \left(
   r \sigma^2 -1
\right)
\frac{\beta^2}{2}
\bigr),
&\text{if \(\beta_\circ / r<\beta< \beta_\circ\),}\\
 \bigl(
  1-r
\bigr)
\bigl(
  1 -r \frac{\beta^2 \sigma^2}{2}
\bigr),
&\text{if \(\beta> \beta_\circ\).}
\end{cases}
\end{eqnarray}
Taking into account the first and the third lines, 
we conclude that there exists some \(r \in (1,2)\) such that \(H_r (\beta) <0\) if \(\beta<\min(\sqrt{2}, \beta_\circ)\) or \(\beta_\circ < \beta< \sqrt{2}/\sigma.\) Therefore, the further analysis depends on the order of numbers \(\beta_\circ\) and \(\sqrt{2}/\sigma< \sqrt{2}\). More precisely, 

\begin{itemize}
\item  If \(a>\left(
  1-\sigma^2
\right) /( \sqrt{2} \sigma)\) (that is, \(\beta_\circ <\sqrt{2}/\sigma<\sqrt{2})\), then the LLN holds for  \(\beta < \sqrt{2}/\sigma.\)
\item  If \(\left(
  1-\sigma^2
\right) / \sqrt{2}<a<\left(
  1-\sigma^2
\right) /( \sqrt{2} \sigma)\) (that is, \(\sqrt{2}/\sigma<\beta_\circ <\sqrt{2})\), then the LLN holds for  \(\beta < \beta_\circ.\)
\item  If \(a<\left(
  1-\sigma^2
\right) / \sqrt{2}\) (that is, \(\beta_\circ >\sqrt{2})\), then the LLN holds for  \(\beta < \sqrt{2}.\) Note that in this case, \(a<\sqrt{2} (1-\sigma)\) and therefore the LLN is not fulfilled  for any \(\beta > \sqrt{2} \) due to the fact that \(P(\beta) = P^1 (\beta).\)
\end{itemize}
\item \(\sigma<1, a>0.\) The proof  follows the same lines as in the previous case. In particular, we get that 
\begin{eqnarray}\label{Hr}
H_r (\beta) = 
\begin{cases}
\bigl(
  1-r
\bigr)
\bigl(
  1 -r \frac{\beta^2 \sigma^2}{2}
\bigr), &\text{if \(\beta< \beta_\circ / r\)}, \\
\bigl(1-r \bigr) + r \bigl(
\frac{\beta^2}{2}
 - \beta a
 -
 \frac{\beta^2 \sigma^2}{2}
\bigr),
&\text{if \(\beta_\circ / r<\beta< \beta_\circ\),}\\
 \bigl(
  1-r
\bigr)
\bigl(
  1 -r \frac{\beta^2}{2}
\bigr),
&\text{if \(\beta> \beta_\circ\),}
\end{cases}
\end{eqnarray}
and the condition \(H_r(\beta)<0\) holds when \(\beta<\min(\sqrt{2}/ \sigma, \beta_\circ)\) or \(\beta_\circ < \beta< \sqrt{2}.\) To conclude the proof it is sufficient to consider 3 cases depending on the order of numbers \(\beta_\circ\) and \(\sqrt{2}<\sqrt{2}/\sigma.\)
\end{enumerate}
\subsection{Proof of Theorem~\ref{clt}}\label{sss}
Let us show 2 methods, which lead to the proof of this theorem. The first one is rather classical and is essentially based on the Lyapounov condition. The second proof  is based on the third part of  Proposition~3.1 from \cite{Panov2017}.

\textbf{Method 1. } Let us check that the Lyapounov condition holds (see (27.16) from \cite{Bill}): there exists \(\delta>0\) such that
\begin{eqnarray*}
\Omega_{n}:=
\frac{\MM_{n}(2+\delta)}{
	\left(
  \lfloor e^n \rfloor
\right)^{\delta/2} \left(
		\Var(e^{\beta \sqrt{n} Z_1})
	\right)^{1+\delta/2}
} \to 0, \qquad \mbox{as} \quad 
n \to \infty.
\end{eqnarray*}
Applying \eqref{exp} and \eqref{MM},
\begin{eqnarray*}
\Omega_n &\asymp&
\frac{
2^{2+\delta} \left(
\frac{
1
}{
2
}e^{((2+\delta)^2 \beta^2 / 2) n} + \frac{
1
}{
2
}\cdot e^{((2+\delta)\beta a + (2+\delta)^2 \beta^2  \sigma^2 /2) n }
\right)
}{
\left(
  \lfloor e^n \rfloor 
\right)^{\delta/2} \left(
\frac{
1
}{
2
} e^{ 2 \beta^2  n} + \frac{
1
}{
2
} \cdot e^{2 (\beta a + \beta^2  \sigma^2 ) n }
	\right)^{1+\delta/2}
}\\
&\asymp&
2^{2\Delta-1}
\exp\left\{
\left[
1 - \Delta + \Delta \left( g_\Delta (\beta) -
 h (\beta)
 \right)
\right] n 
 \right\},
\end{eqnarray*}
where \(\Delta = 1+\delta/2,\) and 
\begin{eqnarray*}
g_\Delta (\beta) &=& \max \left(
 2 \Delta \beta^2, \quad
  2 \beta a +  2\Delta \beta^2  \sigma^2 
\right), \\
h (\beta) &=& \max \left(
 2\beta^2, \quad
2\beta a + 2\beta^2  \sigma^2 
\right),
\end{eqnarray*}
There, one should find the condition on the  existence of \(\Delta>1\) such that 
\begin{eqnarray}\label{cc}
G_\Delta(\beta) := 1 - \Delta +2 \Delta \left( g_\Delta (\beta) -
 h (\beta)
 \right)
<0.
\end{eqnarray}
Note that this task was in fact previously considered on Step~4 of the proof of Theorem~\ref{lln}. The only difference is that \(\beta\) should be changed to \(2 \beta\) everywhere. This observation completes the proof.

\textbf{Method 2.}  Alternatively, let us show how the proof can be derived from the CLT for the summands. From the proof of Proposition~\ref{prop1}, we get the following 2 statements. 
\begin{enumerate}
\item For any sequence \(\gamma_n\), the distribution \(\mu_n\) of the random variable \(e^{\beta \sqrt{n} \xi_1} / \gamma_n\) satisfies
\begin{eqnarray*}
J_n(s, \tau) &:=& \lfloor e^n \rfloor  \int_0^{\tau} x^{s} \mu_{n} (dx) \\
&=& \frac{
\lfloor e^n \rfloor
e^{\beta^2 n s^2/2}
}{
\gamma_n^{s}
}
\Phi\left(
  \frac{
\log(\gamma_n \tau)
}{
\beta \sqrt{n}
}
-\beta s  \sqrt{n}
\right),
\end{eqnarray*}
where \(\tau>0\), \(s=\{1,2\}\). 
Assume now that 
 \begin{eqnarray}\label{RR}
\gamma_n = e^{n (1+ 2 \beta^2)/2} R(n)
\end{eqnarray}
 with some \(R(n)\). Then 
\begin{itemize}
\item  if \(R(n) \asymp c\) for some \(c>0,\) then \(J_n(2, \tau) \asymp c^{-s}\) for any \(\beta<\sqrt{2}/2;\)
\item if \(R(n) \to \infty\) as \(n \to \infty\) then  \(J_n (2, \tau) \to 0.\)
\end{itemize}
\item If the sequence \(\gamma_n\) is such that 
\begin{eqnarray}
\label{a1}
\lim_{n\to\infty}\frac{
\gamma_n
}{
e^{\sqrt{2} \beta n }
}
 = +\infty,
 \end{eqnarray} then 
\begin{eqnarray*}
\sum_{i=1}^{\lfloor e^n \rfloor} \P\left\{
e^{
    \beta \sqrt{n} \xi_i
}> \gamma_n x\right\} 
\to 0.
\end{eqnarray*}
\end{enumerate}
Similar outcomes are valid also for the second distribution:
\begin{enumerate}
\item for any sequence \(\gamma_n,\) the distribution \(\tilde\mu_n\) of  \(e^{ \beta (\sigma\sqrt{n} \xi_i + an)} / \gamma_n\) satisfies
\begin{eqnarray*}
\tilde{J}_n(s, \tau) &:=& \lfloor e^n \rfloor  \int_0^{\tau} x^{s} \tilde\mu_{n} (dx) \\
&=& \frac{
\lfloor e^n \rfloor
e^{\beta a n s + \beta^2 n s^2 \sigma^2 /2}
}{
\gamma_n^{s}
}
\Phi\left(
  \frac{
\log(\gamma_n \tau) - \beta a n 
}{
\beta \sigma \sqrt{n}
}
-\beta \sigma s  \sqrt{n}
\right),
\end{eqnarray*}
where \(\tau>0\), \(s=\{1,2\}\); in particular, from here it follows then if \[\gamma_n = e^{n (1+ 2 \sigma^2\beta^2 + 2 \beta a)/2} R(n)\] with some \(R(n),\) then 
\begin{itemize}
\item if \(R(n) \asymp c\) for some \(c>0,\) then \(\tilde{J}_n(2, \tau) \asymp c^{-s}\) when \(\beta< \sqrt{2}/2\);
\item if \(R(n) \to \infty,\) then 
 \(\tilde{J}_n (2, \tau) \to 0;\)
\end{itemize}

\item if the sequence \(\gamma_n\) is such that 
\begin{eqnarray}\label{a2} \lim_{n\to\infty}\frac{
\gamma_n
}{
e^{(\sqrt{2}  \sigma +a)\beta n}
}
 = +\infty,
 \end{eqnarray}
 then 
\begin{eqnarray*}
\sum_{i=1}^{\lfloor e^n \rfloor} \P\left\{
e^{
    \beta (\sigma\sqrt{n} \xi_i + an)
}> \gamma_n x\right\} 
\to 0.
\end{eqnarray*}
\end{enumerate}

Consider now the mixture of 2 distributions. Our aim is to find a normalizing sequence \(\breve\gamma_n\) such that both condition of  Proposition~3.1 from \cite{Panov2017} (part 3) are fulfilled. Natural candidate is \(\breve\gamma_n := \sqrt{\Var(\S_n(\beta))}\). Note that 
\begin{eqnarray*}
\breve\gamma_n = 
\left(
  \sum_{j=1}^{\lfloor e^n \rfloor}
  \Var 
e^{
    \beta \sqrt{n} Z_j
}
\right)^{1/2}
&=&
\left(
\lfloor e^n \rfloor \cdot 
\left(
  \E e^{2 \beta \sqrt{n} Z_1}
  -
  \left(
  \E e^{ \beta \sqrt{n} Z_1}  
\right)^2
\right)\right)^{1/2}\\
&\asymp&
\left(
\frac{
1
}{
2
}
\lfloor e^n \rfloor \cdot 
e^{2 \beta \max\{ \beta, a + \beta \sigma^2 \} n}\right)^{1/2},
\end{eqnarray*}
since for any \(c \in \R,\)\[
\E e^{c Z_1}  = \left(
e^{c^2/2} 
+
e^{c \sqrt{n} a + c^2 \sigma^2 /2} \right) /2.
\]
Therefore,  \(\breve\gamma_n\) has different asymptotics in the cases  when  \(\beta (1-\sigma^2)>a\) is fulfilled or not. In the first case,  \(\breve\gamma_n \asymp 2^{-1/2}e^{n(1 + 2\beta^2)/2} \). Under this choice of \(\breve\gamma_n\), \eqref{a1} trivially holds, and moreover from 
\begin{eqnarray*}
\lim_{n\to \infty}
\frac{
\gamma_n
}{
e^{(\sqrt{2}  \sigma +a)\beta n}
}
&\geq& 
2^{-1/2} 
\lim_{n\to \infty}
\frac{
e^{(1 + 2\beta a + 2\beta^2 \sigma^2)n /2}
}{
e^{(\sqrt{2}  \sigma +a)\beta n}
}\\
&=& 
2^{-1/2} 
\lim_{n\to \infty}
e^{n(1-\sqrt{2}\beta\sigma)^2/2} = +\infty,
\end{eqnarray*}
we conclude that \eqref{a2} also holds. Therefore,
\begin{eqnarray*}
\breve\Sigma_n 
&=&
\lfloor e^n \rfloor 
\cdot
\P \left\{ 
e^{\beta \sqrt{n} Z_1}
\geq \breve{\gamma}_n x
\right\}\\
&=&
\frac{
1
}{2
}
\lfloor e^n \rfloor 
\cdot
\P \left\{ 
e^{\beta \sqrt{n} \xi_1}
\geq \breve{\gamma}_nx
\right\}
+
\frac{
1
}{2
}
\lfloor e^n \rfloor 
\cdot
\P \left\{ 
e^{\beta \left(  \sigma\sqrt{n} \xi_1 + a n\right)}
\geq \breve{\gamma}_n x
\right\}
\to 0.
\end{eqnarray*}
On another hand, 
\begin{eqnarray}\label{JJ}
\lfloor e^n \rfloor  \int_0^{\tau} x^{s} m_{n} (dx)  
= 
\frac{
1
}{
2
}
J_n (s,\tau)
+
\frac{
1
}{
2
}
\tilde{J}_n (s,\tau),
\end{eqnarray}
where \(m_n\) stands for the distribution of  \(e^{\beta \sqrt{n} Z_1}/\gamma_n.\) Note that \(J_n(2,\tau) \asymp 2\) if \(\beta < \sqrt{2}/2\), and moreover \(\tilde{J}_n(2,\tau) \to 0\). Therefore,  we conclude that the sequence \(\S_n(\beta)/\breve\gamma_n\) converges to a standard  normal random variable, and 
\begin{eqnarray*}
a_n &=& \frac{
1
}{
2
}
J_n (1,1)
+
\frac{
1
}{
2
}
\tilde{J}_n (1,1)\\
&=&
\frac{
\lfloor e^n \rfloor
}{
\gamma_n
}
\left( 
\frac{1} {2} 
\E \left[e^{\beta s \sqrt{n} \xi_1}\right]
+
\frac{1} {2} 
\E \left[e^{\beta s (\sigma\sqrt{n} \xi_i + an)}\right]
\right)
=
\E[\S_n(\beta)].
\end{eqnarray*}
Finally, we conclude that the CLT holds  if \(\beta\) belongs to one of the following areas:
\begin{eqnarray*}
\BB_1 &:=& \Bigl\{\beta: \quad \beta (1-\sigma^2)>a, \;\;
\beta < \sqrt{2}/2 \Bigr\},\\
\BB_2 &:=& \Bigl\{\beta: \quad \beta (1-\sigma^2)<a, \;\;
\beta < \sqrt{2}/(2\sigma) \Bigr\}
\end{eqnarray*}
(the proof for \(\BB_2\) follows the same lines). Consideration of these areas depending on the relations between \(a\) and \(\sigma\) concludes the proof.

\subsection{Proof of Theorem~\ref{stable}}

Due to Proposition~\ref{prop1} (iii), we get that for \(\beta>\sqrt{2}/2,\) it holds
\begin{eqnarray*}
	\frac{\sum_{i=1}^{\lfloor e^n \rfloor}
e^{
    \beta \sqrt{n} \xi_i
}-\delta_n}{\gamma_n}  \eql 
	 F_{\alpha(\beta), \mu(\beta)}, \qquad n \to \infty.
\end{eqnarray*}
From this result, it follows that 
\begin{eqnarray*}
	\frac{\sum_{i=1}^{\lfloor e^n \rfloor}
e^{
    \beta \left( \sigma \sqrt{n} \xi_i + an\right)
}-\tilde\delta_n(\beta)}{\tilde\gamma_n (\beta)}  \eql 
	 F_{\alpha(\beta\sigma),\mu(\beta\sigma)}, \qquad n \to \infty,
\end{eqnarray*}
where \(\tilde\gamma_n (\beta)=e^{\beta a n} \gamma_n (\beta\sigma),\) \(\tilde\delta_n (\beta)=e^{\beta a n} \delta_n (\beta\sigma).\)

As for the mixtures, we should find some \(\breve\gamma_n=\breve\gamma_n(\beta)\) such that the sum 
\begin{eqnarray*}
\breve\Sigma_n 
&=&
\lfloor e^n \rfloor 
\cdot
\P \left\{ 
e^{\beta \sqrt{n} Z_1}
\geq \breve{\gamma}_n x
\right\}\\
&=&
\frac{
1
}{2
}
\lfloor e^n \rfloor 
\cdot
\P \left\{ 
e^{\beta \sqrt{n} \xi_1}
\geq \breve{\gamma}_nx
\right\}
+
\frac{
1
}{2
}
\lfloor e^n \rfloor 
\cdot
\P \left\{ 
e^{\beta \left(  \sigma\sqrt{n} \xi_1 + a n\right)}
\geq \breve{\gamma}_n x
\right\},
\end{eqnarray*}
converges to a non-trivial limit. Note that if \(\gamma_n(\beta) \lesssim \tilde{\gamma}_n(\beta)\) (equivalently, \(a>\sqrt{2}(1-\sigma)\)) then the choice \(\breve{\gamma}_n=\tilde\gamma_n\) yields the second summand in \(\breve{\Sigma}_n\) converges to \(\left( 2 \sqrt{2\pi}\right)^{-1}x^{-\sqrt{2}/(\beta\sigma)}\), while the first tends to 0:
\begin{eqnarray*}
\frac{
1
}{2
}
\lfloor e^n \rfloor 
\cdot
\P \left\{ 
e^{\beta \sqrt{n} \xi_1 }
\geq \breve\gamma_n x
\right\}
&=&
\frac{
1
}{2
}
\lfloor e^n \rfloor 
\cdot
\P \left\{ 
e^{\beta \sqrt{n} \xi_1 }
\geq \gamma_n \left( x
\frac{
\tilde\gamma_n
}{\gamma_n
}
\right)
\right\}\\
&\asymp& 
\frac{
1
}{2\sqrt{2 \pi} 
}
\left(x
\frac{
\tilde\gamma_n
}{\gamma_n
}
\right)^{-\sqrt{2} / \beta } 
\to 0, \quad \mbox{as} \; n \to \infty.
\end{eqnarray*}
Now let us consider formula \eqref{JJ}  with this choice of \(\breve\gamma_n.\) The asymptotic behaviour of the second summand follows from \eqref{Jn}, namely,
\begin{eqnarray*} \label{Jn}
 J_n (s, \tau) \asymp \begin{cases}
\frac{1}{
\sqrt{\pi} \left(\beta \sigma s - \sqrt{2}\right)
} \tau^{\beta \sigma s  - \sqrt{2}}, &\mbox{if \(\sqrt{2} - \beta \sigma s <0\),}\\
\exp\left\{n(\sqrt{2} - \beta \sigma s)^2 /2\right\}, &\mbox{if \(\sqrt{2} - \beta \sigma s >0\)},\\
\frac{
1
}{2
}
\exp\left\{n(\sqrt{2} - \beta \sigma s)^2 /2\right\}, &\mbox{if \(\sqrt{2} - \beta \sigma s =0\)},
\end{cases}
\end{eqnarray*}
and therefore \(\lim_{\tau \to 0} \limsup_{n \to \infty} J_n(2,\tau)=0\)  iff \(\beta> \sqrt{2} / (2\sigma)\).  
As for \(J_n(s,\tau),\) we get 
 \begin{eqnarray*}
J_n(s,\tau)
&=&
\frac{
\lfloor e^n \rfloor
e^{\beta^2 n s^2/2}
}{
\breve\gamma_n^{s}
}
\Phi\left(
  \frac{
\log(\breve\gamma_n \tau)
}{
\beta \sqrt{n}
}
-\beta s  \sqrt{n}
\right)
\end{eqnarray*}Taking into account that 
\(\log( \breve\gamma_n)  \asymp  (a+\sqrt{2}\sigma) \beta n\), and 
\begin{eqnarray*}
  \frac{
\log(\breve\gamma_n \tau)
}{
\beta \sqrt{n}
}
-\beta s  \sqrt{n}
= \left(
a+   \sqrt{2}\sigma - \beta s +
  \frac{
\log(\tau)
}{
\beta n
}
\right) \sqrt{n} - \frac{
\sigma
}{
2\sqrt{2}
}
\frac{
\log(2n)
}{
\sqrt{n}
},
\end{eqnarray*}
we conclude that 
\begin{eqnarray*}
 J_n (s, \tau) \asymp \begin{cases}
c_1 n^{r_1}
e^{r_2 n}
, &\mbox{if \(a+\sqrt{2}\sigma - \beta s <0\),}\\
\lfloor e^n \rfloor
\cdot
\E \left[e^{\beta s \sqrt{n} \xi_1}\right]
\breve\gamma_n^{-s}, &\mbox{if \(a+\sqrt{2}\sigma - \beta s >0\),}\\
\frac{
1
}{2
}
\lfloor e^n \rfloor
\cdot
\E \left[e^{\beta s \sqrt{n} \xi_1}\right]
\breve\gamma_n^{-s}, &\mbox{if \(a+\sqrt{2}\sigma - \beta s =0.\)}
\end{cases}
\end{eqnarray*}
with  \(r_2 =  1 - \left(a + \sqrt{2} \sigma\right)^2 /2<0\) and some \(r_1.\) Note that in the second case (\(a+\sqrt{2}\sigma - \beta s >0\)),
\begin{eqnarray*}
\log(J_n (2, \tau)) & \asymp & \left(
  2 \beta^2 - 2(a+\sqrt{2}\sigma) \beta +1
\right)n,
\end{eqnarray*}
where the quadratic form has 2 roots, \[\beta_\pm =
\frac{\left(
  a + \sqrt{2} \sigma
\right)
 \pm
\sqrt{
\left(
  a + \sqrt{2} \sigma
\right)^2 -2
}
}{2},
\]
where \(\beta_-=\beta_* /2\) and \(\beta_- \leq (a + \sqrt{2} \sigma) /2 \leq \beta_+,\)  and therefore \(J_n (2, \tau) \to 0 \) iff \(\beta > \beta_*/2.\)
Finally, we conclude that the convergence to a stable law holds if and only if \[\beta > \max \left\{ \frac{\beta_*}{2},  \frac{\sqrt{2}}{2 \sigma}. \right\}\]
In the considered area \(\left[
  a>\sqrt{2}(1-\sigma)
\right]\),
the first item is larger then the second if and only if 
\(\sigma>1\) and \(a< (1-\sigma^2)/ (\sqrt{2} \sigma).\)  This observation concludes the proof.

\appendix
\section{Previous research} 
\label{A}
As it was already mentioned in the introduction, the free energy of the considered system was recently studied by Grabchak, Molchanov \cite{GM17}. In this appendix, we shortly discuss the main results from that research.

Free energy of the system with \(\lfloor e^n \rfloor\) standard normal energy levels is equal  to 
\begin{eqnarray*}
P_1 (\beta) = 
 \lim_{n \to \infty} \frac{
\ln S_n^1 (\beta) 
}{
n
}
&=&
\left\{
\begin{aligned}
1+ \beta^2 /2, & \qquad \beta \leq \sqrt{2}, \\
\sqrt{2} \beta, & \qquad \beta >\sqrt{2}.
\end{aligned}
\right.,
\end{eqnarray*}
whereas the free energy of system with  \(\lfloor e^n \rfloor\)  energy levels having  \(\NN(\sqrt{n}a,\sigma^2)\) distribution equals
\begin{eqnarray*}
P_2 (\beta) = 
 \lim_{n \to \infty} \frac{
\ln S_n^2 (\beta) 
}{
n
}
&=&
\left\{
\begin{aligned}
1+ \beta a + \beta^2 \sigma^2 /2, & \qquad \beta \leq \sqrt{2}/\sigma \\
\left( \sigma \sqrt{2} + a\right) \beta, & \qquad \beta >\sqrt{2}/ \sigma.
\end{aligned}
\right.
\end{eqnarray*}
It turns out that  the free energy of the system with mixture distribution of energy levels is equal to \(P(\beta)=\max\{P_1(\beta), P_2(\beta)\}\). This observation leads to the conclusion, that \(P(\beta)\) essentially differs between 6 possible types of relations between \(a\) and \(\sigma\),
see Figure~\ref{fig2}:
\begin{figure}
\begin{center}
\includegraphics[width=0.8\linewidth ]{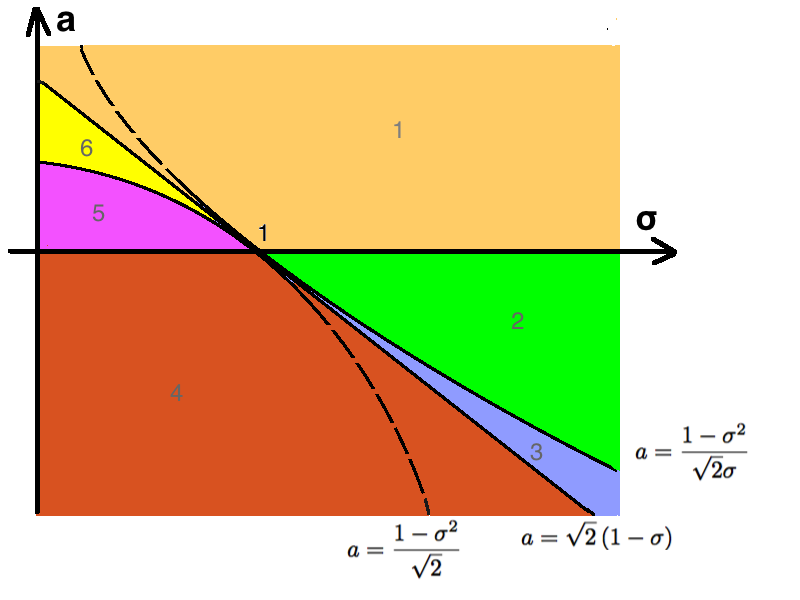}\caption{\label{fig2}Illustration of the  asymptotic behavior of the free energy depending on the parameters \(a\) and \(\sigma.\)}
\end{center}\end{figure}
\begin{enumerate}
\item Zone 1 (orange), \(a>0\) and \(a> \sqrt{2} ( 1- \sigma)\): \quad \(P(\beta) = P_2(\beta)\).
\item Zone 2 (green), \(a<0\) and \(a>(1-\sigma^2) / (\sqrt{2} \sigma)\):
\begin{eqnarray*}
P(\beta) = \left\{
\begin{aligned}
P_1 (\beta), & \qquad \beta \leq \beta_{\circ} \\
P_2 (\beta), & \qquad \beta >\beta_{\circ}.
\end{aligned}
\right.
\end{eqnarray*}
with \(\beta_{\circ} = 2a / (1-\sigma^2)\).
\item Zone 3 (blue), \(a<0\) and \(\sqrt{2} ( 1- \sigma)<a<(1-\sigma^2) / (\sqrt{2} \sigma)\): 
\begin{eqnarray*}
P(\beta) = \left\{
\begin{aligned}
P_1 (\beta), & \qquad \beta \leq \beta_{*} \\
P_2 (\beta), & \qquad \beta >\beta_{*}.
\end{aligned}
\right.
\end{eqnarray*}
with \(\beta_{*} = \left(
  \sigma \sqrt{2} +a
\right) - \sqrt{\left(
  \sigma \sqrt{2} +a
\right)^2 - 2}\).
\item Zone 4 (red), \(a<0\) and \(a< \sqrt{2} ( 1- \sigma)\): \(P(\beta) = P_2(\beta).\)
\item Zone 5 (purple), \(a>0\) and \((1-\sigma^2) / \sqrt{2}<a<\sqrt{2} ( 1- \sigma)\): 
\begin{eqnarray*}
P(\beta) = \left\{
\begin{aligned}
1 + a \beta + \sigma^2 \beta^2 /2, & \qquad \beta \leq \beta_{\diamond}, \\
\sqrt{2} \beta, & \qquad  \beta > \beta_\diamond,
\end{aligned}
\right.
\end{eqnarray*}
where \(\beta_\diamond = \left(
 ( \sqrt{2} -a ) - \sqrt{\left(
  \sqrt{2} - a 
\right)^2  - 2 \sigma^2}
\right)/\sigma^2.\)
\item Zone 6 (yellow): \(a>0\) and \(a<(1-\sigma^2) / \sqrt{2}\): 
\begin{eqnarray*}
P(\beta) = \left\{
\begin{aligned}
1 + a \beta + \sigma^2 \beta^2 /2, & \qquad \beta \leq \beta_{\circ}, \\
1 + \beta^2 /2, & \qquad \beta_\circ< \beta < \sqrt{2}, \\ 
\sqrt{2} \beta, & \qquad \beta > \sqrt{2}.
\end{aligned}
\right.
\end{eqnarray*}
\end{enumerate}
It would be a worth mentioning that these zones do not completely coincide with the zones on Figure~\ref{fig1}. Nevertheless, the critical values \(\beta_*\) and \(\beta_\diamond\) appears under the same assumptions on the relation between \(a\) and \(\sigma\).

\bibliographystyle{plain}
\bibliography{Panov_bibliography}

\end{document}